\newtheorem{theorem}{Theorem}[section]
\newaliascnt{lem}{theorem}
\newtheorem{lemma}[lem]{Lemma}
\newaliascnt{ass}{theorem}
\newaliascnt{prop}{theorem}
\newtheorem{prop}[prop]{Proposition}
\newaliascnt{cor}{theorem}
\newtheorem{corollary}[cor]{Corollary}
\newaliascnt{defi}{theorem}
\theoremstyle{definition}
\newaliascnt{ex}{theorem}
\newaliascnt{rem}{theorem}
\newtheorem{remark}[rem]{Remark}
\renewcommand{\d}{\,\mathrm{d}}											
\renewcommand*{\epsilon}{\varepsilon}                                   
\renewcommand*{\rho}{\varrho}                                   		
\newcommand*{\sep}{\; \vrule \;}                                        
\newcommand*{\N}{\mathbb{N}}                                            
\newcommand*{\R}{\mathbb{R}}                                            
\newcommand*{\C}{\mathbb{C}}                                            
\renewcommand*{\S}{\mathcal{S}}                                         
\newcommand*{\D}{\mathcal{D}}                                         	
\newcommand*{\J}{\mathcal{J}}                                         	
\newcommand*{\abs}[1]{\left| #1 \right|}                                
\newcommand*{\norm}[1]{\left\| #1 \right\|}                             
\newcommand*{\ceil}[1]{\left\lceil #1 \right\rceil}                     
\newcommand*{\link}[1]{(\ref{#1})}                                      
\newcommand*{\distr}[2]{\left\langle #1, #2 \right\rangle}              
\renewcommand{\tilde}[1]{ \widetilde{#1} }        						
\DeclareMathOperator{\supp}{supp}										
\newcommand{\esssup}{ \mathop{\mathrm{esssup}}\limits }
\setlist{itemsep=2pt, topsep=2pt}
\title{On the lack of interior regularity of the\\ $p$-Poisson problem with $p>2$}
\author{Markus Weimar\footnote{Ruhr University Bochum, Faculty of Mathematics, Universit\"atsstra{\ss}e 150, 44801 Bochum, Germany. Email: markus.weimar@rub.de }} 
\begin{document}   
\maketitle   

\begin{abstract}
\noindent In this note we are concerned with interior regularity properties of the $p$-Poisson problem $\Delta_p(u)=f$ with $p>2$. For all $0<\lambda\leq 1$ we constuct right-hand sides $f$ of differentiability $-1+\lambda$ such that the (Besov-) smoothness of corresponding solutions $u$ is essentially limited to $1+\lambda / (p-1)$. The statements are of local nature and cover all integrability parameters. They particularly imply the optimality of a shift theorem due to Savar{\'e} [J.\ Funct.\ Anal.\ 152:176--201, 1998], as well as of some recent Besov regularity results of Dahlke et al.\ [Nonlinear Anal.\ 130:298--329, 2016].

\smallskip
\noindent \textbf{Keywords:} Nonlinear and adaptive approximation, Besov space, regularity of solutions, $p$-Poisson problem.

\smallskip
\noindent \textbf{2010 Mathematics Subject Classification:} 
35B35, 
35J92, 
41A46, 
46E35, 
65M99. 
\end{abstract}

\section{Introduction and main results}
In what follows we deal with interior regularity properties of solutions $u\in W^1_p(\Omega)$ to the $p$-Poisson problem
\begin{align}\label{eq:strong_ppoisson}
	-\Delta_p(u)=f
\end{align}
on bounded Lipschitz domains $\Omega \subset\R^d$ for $d\in\N$ and $1<p<\infty$. Here the $p$-Laplace operator $\Delta_p$ is given by
$$
	\Delta_p(u):=\mathrm{div}(A(\nabla u)),
	\qquad \text{where} \qquad A(\nabla u):=\abs{\nabla u}^{p-2}\nabla u
$$ 
is called the naturally associated vector field. For distributions $f\in W^{-1}_{p'}(\Omega):=(W^1_{p,0}(\Omega))'$, with $1/p+1/p'=1$, the corresponding variational formulation is given by
\begin{align*}
	\int_\Omega \distr{A(\nabla u)(x)}{\nabla \psi(x)}_{\R^d} \d x = f(\psi), \qquad \psi \in \D(\Omega),
\end{align*}
where $\D(\Omega):=C_0^\infty(\Omega)$ is the set of test functions on $\Omega$, the space $W^1_{p,0}(\Omega)$ is the closure of $\D(\Omega)$ w.r.t.\ the first order $L_p$-Sobolev norm, and $\distr{\cdot}{\cdot}_{\R^d}$ denotes the 
inner product on $\R^d$.

Equations of type \link{eq:strong_ppoisson} arise in various applications such as non-Newtonian fluid theory, rheology, radiation of heat and many others. In fact the quasi-linear operator $\Delta_p$ has a similar model character for nonlinear problems as the ordinary Laplacian (i.e., the case $p=2$) for linear problems. Meanwhile, many results concerning existence and uniqueness of solutions are known. For details we refer to \cite{Lin2006} and the references therein. However, most of these results deal with classical function spaces of H\"older or Sobolev type. On the other hand, in view of strong relations to nonlinear approximation classes and adaptive numerical algorithms, regularity results in more general smoothness spaces of Besov type became more and more important in recent times; see, e.g., \cite{CDD01, GasMor2014}. For the $p$-Poisson equation \link{eq:strong_ppoisson} and related problems only few results are known in this direction, see \cite{BalDieWei2018, DahDieHar+2016, HarWei2018}, as well as \cite{DeT1982, Sav1998, Sim1978}.

Let us recall that for $0<\rho,q\leq\infty$ and $s\in\R$ Besov spaces $B^s_{\rho,q}(\R^d)$ are quasi-Banach spaces which can be defined as subsets of tempered distributions $f\in\S'(\R^d)$ by means of harmonic analysis. The corresponding spaces on domains (i.e.\ connected open subsets of $\R^d$) are then defined by restriction such that we obtain subsets of $\D'(\Omega)$. 

\begin{remark}[Function spaces]
We assume that the reader is familiar with the basics of function space theory as it can be found, e.g., in the monographic series of Triebel \cite{T83,T92,T06,T08}. Anyhow, let us mention that by now various equivalent characterizations, embeddings, interpolation and duality assertions for the scale of Besov spaces are known. Without going into further details, let us recall the following results, valid for bounded Lipschitz domains $\Omega\subset\R^d$ or $\Omega=\R^d$ itself:
\begin{itemize}
\item[(i)] For $0<\rho,q\leq \infty$, and $s > d \, \max\{0,1/\rho-1\}$ the Besov space $B^s_{\rho,q}(\Omega)$ can be characterized as collection of all $g\in L_\rho(\Omega)$ for which 
\begin{equation*}
	\abs{g}_{B^s_{\rho,q}(\Omega)}
	:=\begin{cases}
		\displaystyle\bigg( \int_0^{\overline{t}} \bigg[ t^{-s} \sup_{\substack{h\in\R^d,\\\abs{h}_2\leq t}} \norm{\Delta_h^k g \sep L_{\rho}(\Omega_{h,k})} \bigg]^q \frac{\d t}{t} \bigg)^{1/q}, &0<q<\infty,\\
		\displaystyle\sup_{0<t\leq \overline{t}} t^{-s} \sup_{\substack{h\in\R^d,\\\abs{h}_2\leq t}} \norm{\Delta_h^k g \sep L_{\rho}(\Omega_{h,k})}, &q=\infty,
	\end{cases}
\end{equation*}
is finite \cite[Sect.\ 1.11.9]{T06}. 

In fact, the expression 
$$
	\norm{g \sep B^s_{\rho,q}(\Omega)} := \norm{g \sep L_\rho(\Omega)}+\abs{g}_{B^s_{\rho,q}(\Omega)}
$$ 
provides a quasi-norm on $B^s_{\rho,q}(\Omega)$. Here we assume $\overline{t}>0$ and $k> s$ to be fixed. Further, 
$\Delta_h^k g$ denotes the $k$-th order finite difference of $g$ with step size $h\in\R^d$ and  
$$
	\Omega_{h,k}:=\{x \in \R^d \sep x+\ell h \in \Omega \,\text{ for all }\, \ell=0,\ldots,k \}.
$$ 
\item[(ii)] For $0<s\notin \N$ we have $B_{\infty,\infty}^s(\Omega)=C^s(\Omega)$ (H\"older spaces) and
$$
	W^s_\rho(\Omega)=B^s_{\rho,\rho}(\Omega) \quad \text{for all} \quad 1 \leq \rho<\infty, 
$$
(Sobolev-Slobodeckij spaces) in the sense of equivalent norms. 
\end{itemize}
So, roughly speaking, in $B^s_{\rho,q}(\Omega)$ we collect all $g$ such that their weak partial derivatives $D^\alpha g$ up to order $s$ belong to the Lebesgue space $L_{\rho}(\Omega)$. The third parameter $0<q\leq \infty$ acts as a minor important fine index.
\hfill$\square$
\end{remark}

In his seminal paper \cite{Sav1998} Savar{\'e} developed a variational argument which allows to show the following shift theorem for the $p$-Poisson problem:
\begin{prop}[{see \cite[Thm.\ 2]{Sav1998}}]\label{prop:savare}
	For $d\in\N$ let $\Omega\subset\R^n$ be a bounded Lipschitz domain. For $2<p<\infty$ and $f \in W^{-1}_{p'}(\Omega)$ let $u$ be the unique weak solution to \link{eq:strong_ppoisson} in $W_{p,0}^1(\Omega)$.
	Then for all $\lambda \in (0,1/p')$ the following implications hold:
	\begin{align}\label{eq:shift}
		f \in W_{p'}^{-1+\lambda}(\Omega) \quad \Longrightarrow \quad u \in W_{p}^{1+\frac{\lambda}{p-1}}(\Omega)
	\end{align}
	and
	$$
		f \in B_{p',1}^{-1+ \frac{1}{p'}}(\Omega) \quad \Longrightarrow \quad u \in B_{p,\infty}^{1+\frac{1/p'}{p-1}}(\Omega).
	$$
\end{prop}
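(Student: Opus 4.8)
The plan is to combine the monotone structure of the $p$-Laplacian with the classical difference-quotient technique and a self-improving iteration, whose fixed point turns out to be exactly $\lambda/(p-1)$. For $p\ge 2$ the flux $A(\xi)=\abs{\xi}^{p-2}\xi$ satisfies the strong monotonicity estimate $\distr{A(\xi)-A(\eta)}{\xi-\eta}_{\R^d}\gtrsim\abs{\xi-\eta}^p$ and the growth bound $\abs{A(\xi)-A(\eta)}\lesssim(\abs{\xi}+\abs{\eta})^{p-2}\abs{\xi-\eta}$ for all $\xi,\eta\in\R^d$; these, together with the (nonlinear) energy estimate $\norm{\nabla u\sep L_p}^{p-1}\lesssim\norm{f\sep W^{-1}_{p'}}$, are the only structural inputs. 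I would first derive an interior estimate. Fix concentric balls $B\subset B'$ with $\overline{B'}\subset\Omega$ and a cutoff $\eta\in\D(B')$ with $\eta\equiv 1$ on $B$. For $h\in\R^d$ small enough that $B'+[0,h]\subset\Omega$, the translate $u(\cdot+h)$ solves the $p$-Poisson problem on $B'$ with right-hand side $f(\cdot+h)$, so the finite difference $\Delta_h u=u(\cdot+h)-u$ satisfies $-\Div{A(\nabla u(\cdot+h))-A(\nabla u)}=\Delta_h f$ weakly on $B'$. Testing this identity against $\eta^p\Delta_h u\in W^1_{p,0}(B')$ and bounding the principal term from below by strong monotonicity yields
$$
	c\int_B\abs{\Delta_h\nabla u}^p\d x\;\le\;\abs{\distr{\Delta_h f}{\eta^p\Delta_h u}}\;+\;c'\abs{h}^2\norm{u\sep W^1_p(B')}^p,
$$
where the last term absorbs the cutoff commutator (disposed of by a routine Young/H\"older argument using the growth bound and $\norm{\Delta_h u\sep L_p(B')}\lesssim\abs{h}\,\norm{\nabla u\sep L_p(B')}$) and will turn out to be subordinate.

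The heart of the matter is the data pairing, from which a power of $\abs{h}$ is to be extracted out of \emph{both} factors. Since $f\in W^{-1+\lambda}_{p'}(\Omega)$, the elementary gain estimate for first differences gives $\norm{\Delta_h f\sep W^{-1}_{p'}(B')}\lesssim\abs{h}^\lambda\norm{f\sep W^{-1+\lambda}_{p'}(B')}$; and \emph{if we already know} $u\in B^{1+\sigma}_{p,\infty}(B')$ for some $0\le\sigma<1$, then $\norm{\Delta_h\nabla u\sep L_p(B')}\lesssim\abs{h}^\sigma$ and hence $\norm{\eta^p\Delta_h u\sep W^1_{p,0}(B')}\lesssim\abs{h}^\sigma$. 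Pairing the dual spaces $W^{-1}_{p'}(B')$ and $W^1_{p,0}(B')$ therefore gives $\abs{\distr{\Delta_h f}{\eta^p\Delta_h u}}\lesssim\abs{h}^{\sigma+\lambda}$, and since $\sigma+\lambda<1<p$ the commutator is subordinate, so $\norm{\Delta_h\nabla u\sep L_p(B)}\lesssim\abs{h}^{(\sigma+\lambda)/p}$. By the difference characterization of Besov spaces (Remark (i); first-order differences of $\nabla u$ suffice because the smoothness stays below $1$) this means $u\in B^{1+(\sigma+\lambda)/p}_{p,\infty}(B)$. Now iterate on a slowly shrinking family of balls: the map $\sigma\mapsto(\sigma+\lambda)/p$ is strictly increasing on $[0,\lambda/(p-1))$ with unique fixed point $\lambda/(p-1)$, so starting from $\sigma_0=0$ (legitimate, since $\nabla u\in L_p$ forces $\norm{\Delta_h\nabla u\sep L_p(B')}\lesssim 1$) one reaches, after finitely many steps, $u\in B^{1+\sigma}_{p,\infty,\loc}(\Omega)$ for every $\sigma<\lambda/(p-1)$. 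The hypothesis $\lambda<1/p'$ is precisely what guarantees $\sigma+\lambda\le\lambda p/(p-1)<1$ all along, so the scheme never leaves its admissible range.

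Two further ingredients complete the proof. \emph{(i) The precise spaces.} The bootstrap stops just short of $\sigma=\lambda/(p-1)$ and produces the fine index $\infty$, whereas the first implication asks for the sharper target $W^{1+\lambda/(p-1)}_p=B^{1+\lambda/(p-1)}_{p,p}$; closing this gap calls for a sharper, $h$-integrated form of the difference estimate, or --- more structurally --- for nonlinear interpolation (in the spirit of Tartar and Peetre) between the $\lambda=0$ energy estimate, whose characteristic power $p-1$ is the source of the $1/(p-1)$ shift, and the endpoint at $\lambda=1/p'$, for which the smaller data space $B^{-1+1/p'}_{p',1}$ and the coarser target $B^{1+(1/p')/(p-1)}_{p,\infty}$ are exactly the borderline-compatible pair (this is the second implication, whose $\sup$-in-$h$ / index-$\infty$ conclusion is what the difference estimate naturally delivers). \emph{(ii) The Lipschitz boundary.} Since $\Omega$ is only Lipschitz one cannot translate freely near $\partial\Omega$: I would flatten $\partial\Omega$ locally by bi-Lipschitz maps --- under which the problem retains a monotone operator of $p$-growth type with merely measurable coefficients --- run the estimate above for \emph{tangential} difference quotients (which preserve the homogeneous Dirichlet datum and hence remain admissible test functions), and recover the missing normal direction from the equation itself, which controls a normal derivative of the normal component of the flux $A(\nabla u)$ through $f$ and the already-obtained tangential regularity; the strict monotonicity of $A$ turns this into regularity of $\partial_n u$, and a partition of unity globalizes to the spaces stated.

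I expect the genuine obstacles to be exactly these two. Handling a true Lipschitz boundary --- keeping the monotone structure under non-smooth changes of variables, and extracting the normal-direction regularity in a form compatible with the tangential (fractional) Besov bounds --- is technically the most demanding part. And forcing the self-improving scheme to land precisely on the Sobolev--Slobodeckij scale at the critical smoothness, rather than merely on its $B_{p,\infty}$ neighbours, requires the quantitative nonlinear interpolation above together with some care over the borderline trace and duality identities at smoothness $1/p$.
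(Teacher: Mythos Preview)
The paper does not contain a proof of this proposition. Proposition~1.2 is stated as a citation --- ``see \cite[Thm.\ 2]{Sav1998}'' --- and the paper only remarks that ``Savar{\'e} developed a variational argument which allows to show the following shift theorem''; no argument is reproduced. The paper's actual contribution is the converse direction: Theorems~1.3 and~1.6 construct explicit right-hand sides showing that the shift in Proposition~1.2 cannot be improved. So there is nothing in this paper to compare your proposal against.

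That said, a brief comment on your sketch relative to what Savar{\'e} actually does. Your difference-quotient-plus-bootstrap scheme is a plausible route, but it is not Savar{\'e}'s. His argument is not iterative: he works in an abstract variational framework for monotone operators deriving from a convex potential, and obtains the fractional gain $\lambda/(p-1)$ in a single step by testing the equation against a carefully chosen combination of $u$ and its translate and exploiting the convexity inequality for the energy $J(v)=\tfrac{1}{p}\int\abs{\nabla v}^p$ (this is where the exponent $p-1$ enters once, not as the fixed point of an iteration). The endpoint statement with $B^{-1+1/p'}_{p',1}$ and $B^{1+(1/p')/(p-1)}_{p,\infty}$ then follows by real interpolation of the resulting family of estimates, and the Lipschitz boundary is handled by his abstract difference-quotient lemma for reflexive spaces rather than by local flattening. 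Your plan would likely work for the interior version and for the $B_{p,\infty}$ conclusions, but the two points you flag as ``genuine obstacles'' --- hitting $W^{1+\lambda/(p-1)}_p$ exactly and treating the Lipschitz boundary --- are precisely where Savar{\'e}'s one-shot variational inequality and abstract framework buy something your iterative scheme does not obviously deliver.
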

In addition, Savar{\'e} claims that \link{eq:shift} is ``optimal'' \cite[Rem.\ 4.3]{Sav1998} and refers to Simon \cite{Sim1978}. But Simon's optimality results refer to a (slightly) different equation on the whole of $\R^d$ with right hand sides in $L_{r}$ or $C^\infty$ and hence they do not cover  Savar{\'e}'s claim at all. 

However, it is possible to use similar ideas in order to show the following \autoref{thm:main} which constitutes the main result of this note. It states that for $p>2$ and all $0<\lambda<1$ there are right-hand sides of smoothness $-1+\lambda$ such that the smoothness of corresponding solutions to the $p$-Poisson problem \link{eq:strong_ppoisson} is essentially limited by $1+\lambda/(p-1)$. Moreover, this actually holds \emph{independently} of the integrability parameter. That is, in sharp contrast to point singularities, we \emph{do not gain smoothness} when derivatives are measured in weaker $L_\rho$-norms.
For a (constructive) proof we refer to \autoref{subsect:proof-main} below.

\begin{theorem}\label{thm:main}	
	Let $d\in \N$ and $2 \leq p<\infty$ be fixed. Further let $\Omega \subseteq \R^d$ be either $\R^d$ itself, a bounded Lipschitz domain, or an interval (if $d=1$). Moreover, assume $0<\epsilon<1/p$. 
	Then for all $\epsilon\,(p-1)<\lambda < 1-\epsilon$ and $1<\mu \leq \infty$ there exists a right-hand side
	$$
		f=f_{\lambda,\mu}\in B^{-1+\lambda}_{\mu,\infty}(\Omega)\cap W^{-1}_{p'}(\Omega)
	$$ 
	with compact support in $\Omega$ 
	such that the corresponding weak solution $u \in W^1_{p,0}(\Omega)$ to \link{eq:strong_ppoisson} is compactly supported as well
	and satisfies
	\begin{align}\label{eq:reg_u}
	  	u \in \left\{\begin{array}{rclcrlcr}
			\!\! B^{1+\frac{\lambda}{p-1}-\epsilon}_{\rho,q}(\Omega) \!\!\!&\setminus&\!\!\! B^{1+\frac{\lambda}{p-1}}_{\rho,q}(\Omega) 
				&\;\text{if}&  \mu(p-1) < \rho \!\!\!&\leq \infty &\text{and}& 0<q \leq \infty,\\
			\!\! B^{1+\frac{\lambda}{p-1}}_{\rho,\infty}(\Omega)  \!\!\!&\setminus&\!\!\! B^{1+\frac{\lambda}{p-1}}_{\rho,q}(\Omega)
			 	&\;\text{if}&  \rho \!\!\!&= \mu(p-1) &\text{and}& 0<q<\infty,\\
			\!\! B^{1+\frac{\lambda}{p-1}}_{\rho,q}(\Omega)  \!\!\!&\setminus&\!\!\! B^{1+\frac{\lambda}{p-1}+\epsilon}_{\rho,q}(\Omega) 
				&\;\text{if}&  1 \leq \rho \!\!\!&< \mu(p-1) &\text{and}& 0<q\leq\infty.
		\end{array}\right. 
	\end{align}
	In addition, the naturally associated vector field satisfies
	\begin{align}\label{eq:reg_A}
		A(\nabla u) \in \left\{\begin{array}{rclcrlcr}
			\big( B^{\lambda-\epsilon}_{\rho,q}(\Omega) \!\!\!&\setminus&\!\!\! B^{\lambda}_{\rho,q}(\Omega) \big)^d 	
			 	&\quad\text{if}&  \mu < \rho \!\!\!&\leq \infty &\text{and}& 0<q \leq \infty,\\
			\big( B^{\lambda}_{\rho,\infty}(\Omega) \!\!\!&\setminus&\!\!\! B^{\lambda}_{\rho,q}(\Omega) \big)^d
				&\quad\text{if}&  \rho \!\!\!&= \mu &\text{and}& 0<q<\infty,\\
			\big( B^{\lambda}_{\rho,q}(\Omega) \!\!\!&\setminus&\!\!\! B^{\lambda+\epsilon}_{\rho,q}(\Omega) \big)^d 
				&\quad\text{if}&  1 \leq \rho \!\!\!&< \mu &\text{and}& 0<q\leq\infty.
		\end{array}\right. 
	\end{align}
\end{theorem}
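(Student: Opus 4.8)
The plan is to exhibit the example explicitly, first as a one‑dimensional model function and then lifting it to arbitrary admissible $\Omega$, and to read off every regularity assertion from the atomic/wavelet characterisation of Besov spaces recalled in the Remark. First I would fix a nonnegative ``flat'' bump $\phi\in\D(\R)$, all of whose derivatives vanish on $\partial\supp\phi$ (so that $\phi^{p-1}\in\D(\R)$ as well), and choose two exponents $\alpha,\beta>0$ subject to
\[
	\alpha=\frac{\lambda}{p-1}-\frac{\beta}{\mu(p-1)}\qquad(\text{with }1/\mu:=0\text{ if }\mu=\infty),
\]
with $\beta$ so small — depending only on $\epsilon,\lambda,\mu,p$ — that $\alpha>0$, $\beta<\epsilon$ and $\beta<\epsilon\,\mu(p-1)$; the hypotheses $\epsilon(p-1)<\lambda<1-\epsilon$ and $\epsilon<1/p$ are precisely what make such a choice possible ($\lambda/(p-1)>\epsilon$ leaves room for $\alpha>0$, while $\epsilon<1/p$ is what keeps the $\lambda$‑interval nonempty). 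On dyadic level $k=0,1,2,\dots$ I place $N_k\sim 2^{k(1-\beta)}$ pairwise well‑separated ``dipoles'' $x\mapsto a_k\phi(2^k(x-y_{k,j}))-a_k\phi(2^k(x-y'_{k,j}))$ of height $a_k:=2^{-k\alpha}$ and width $2^{-k}$, all the level‑$k$ bumps lying in an interval $I_k$ of length $\sim 2^{-k\beta}$, the $I_k$ pairwise disjoint and shrinking to a point. Let $\bar g$ be the resulting locally finite, disjointly supported, mean‑zero superposition and put $\bar u(x):=\int_{-\infty}^x\bar g$; then $\bar u$ is compactly supported, continuously differentiable, and $\bar u'=\bar g\in L_p(\R)$ because $\alpha,\beta>0$. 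For $d\ge2$ I set $u(x):=\bar u(x_1)\,\eta(x)$ with $\eta\in\D(\R^d)$ equal to $1$ on a large ball (containing the accumulation point in its interior), and for $d=1$ simply $u:=\bar u$. Finally I define $f:=-\Delta_p(u)\in\D'(\Omega)$. Since $\abs{A(\nabla u)}=\abs{\nabla u}^{p-1}$ and $\nabla u\in L_p$, one gets $A(\nabla u)\in L_{p'}$, hence $f\in W^{-1}_{p'}(\Omega)$; and since $u\in W^1_{p,0}(\Omega)$ with $\int_\Omega\distr{A(\nabla u)}{\nabla\psi}_{\R^d}\d x=f(\psi)$ for all $\psi\in\D(\Omega)$, strict monotonicity of $\Delta_p$ shows that $u$ is \emph{the} weak solution to \link{eq:strong_ppoisson} for this $f$.

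The core of the argument is the regularity bookkeeping for $\bar g$ and $A(\bar g)$. Because the bumps of $\bar g$ are pairwise disjoint, $A(\bar g)=\abs{\bar g}^{p-2}\bar g$ is computed bump by bump — on a level‑$k$ dipole it equals $\pm a_k^{p-1}\phi^{p-1}(2^k(\cdot-\cdot))$ — so $A(\bar g)$ is again a bump sum with the same geometry but heights $a_k^{p-1}$. For such bump sums the atomic characterisation yields, normalising each width‑$2^{-k}$ bump to an $L_\rho$‑atom (which attaches a coefficient of size $\sim 2^{-k/\rho}a_k$ to $\sim N_k$ lattice positions), that for $1\le\rho\le\infty$, $0<q\le\infty$ and $s$ in the relevant range
\[
	\norm{\bar g\sep B^s_{\rho,q}(\R)}\sim\Big\|\big(2^{k(s-\alpha-\beta/\rho)}\big)_k\Big\|_{\ell_q},\qquad \norm{A(\bar g)\sep B^s_{\rho,q}(\R)}\sim\Big\|\big(2^{k(s-(p-1)\alpha-\beta/\rho)}\big)_k\Big\|_{\ell_q},
\]
the ``$\lesssim$'' part being atomic synthesis and the ``$\gtrsim$'' part following by testing $K$‑th order differences ($K=2$ suffices) at step size $\sim 2^{-k}$ over the disjoint regions $I_k$, no cancellation occurring; a separate estimate shows that the self‑similar accumulation of the $I_k$ contributes only the strictly milder restriction $s\le\alpha/\beta+1/\rho$ (resp.\ $(p-1)\alpha/\beta+1/\rho$), never binding as $0<\beta<1$. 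Hence the critical smoothness of $\bar g$ in $L_\rho$ is $\alpha+\beta/\rho$ and that of $A(\bar g)$ is $(p-1)\alpha+\beta/\rho$; inserting the relation above turns these into $\frac{\lambda}{p-1}+\beta(\frac1\rho-\frac1{\mu(p-1)})$ and $\lambda+\beta(\frac1\rho-\frac1\mu)$, which equal $\lambda/(p-1)$ at $\rho=\mu(p-1)$ and $\lambda$ at $\rho=\mu$, lie below for larger $\rho$, above for smaller $\rho$, and — here the smallness of $\beta$ is used — stay within $\epsilon$ of these values throughout $\rho\in[1,\infty]$. Since $\bar u$ is one derivative smoother than $\bar g$, reading off the three fine‑index regimes ($B^{\cdot}_{\rho,\infty}$ exactly at the critical smoothness, every $B^{\cdot}_{\rho,q}$ strictly below, none strictly above) gives \link{eq:reg_u} for $\bar u$ and \link{eq:reg_A} for $A(\bar u')$, and differentiating once (which maps $B^\lambda_{\mu,\infty}$ into $B^{-1+\lambda}_{\mu,\infty}$) yields $f\in B^{-1+\lambda}_{\mu,\infty}(\R)$.

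It then remains to pass to a general admissible $\Omega$ and to $d\ge2$. Where $\eta\equiv1$ one has $u(x)=\bar u(x_1)$, $A(\nabla u)(x)=(A(\bar u')(x_1),0,\dots,0)$ and $f(x)=-(A(\bar u'))'(x_1)$, so all the singular behaviour is that of the one‑dimensional example tensored with a smooth function of $x'=(x_2,\dots,x_d)$: tensoring a compactly supported distribution on $\R$ with a $\D(\R^{d-1})$‑function preserves membership and non‑membership in every $B^s_{\rho,q}$ (differences in the $x'$‑directions cost only $O(\abs{h}^K)$), and the $\eta$‑corrections are smooth and therefore affect none of the claimed (non‑)memberships; in particular the singular set of $u$ is a countable union of hyperplanes, so no smoothness is gained by lowering $\rho$ in the $x'$‑directions either. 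Finally, for a bounded Lipschitz domain — or an interval, if $d=1$ — I rescale and translate the whole construction into a ball $B\subset\subset\Omega$, whence $u\in W^1_{p,0}(\Omega)$, $f\in W^{-1}_{p'}(\Omega)$ as before, and since the Besov spaces on such $\Omega$ are restrictions of those on $\R^d$ while our functions are compactly supported in $\Omega$, the assertions \link{eq:reg_u}, \link{eq:reg_A} and $f\in B^{-1+\lambda}_{\mu,\infty}(\Omega)$ transfer verbatim. The step I expect to be the main obstacle is the lower halves of the norm equivalences above — genuinely ruling out extra Besov smoothness \emph{uniformly in the fine index $q$}, and for small $\rho$ showing that the $\beta/\rho$‑gain intrinsic to any multiscale construction cannot be pushed past $\epsilon$ — together with checking that the accumulation of the levels $I_k$ does not secretly improve the regularity near the accumulation point.
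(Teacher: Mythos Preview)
Your one–dimensional construction is sound and closely parallels the paper's, just with different building blocks: you use smooth ``flat'' dyadic bumps with heights $2^{-k\alpha}$ and multiplicities $\sim 2^{k(1-\beta)}$, whereas the paper uses power–like pieces $\xi\mapsto\xi^{\sigma}$ on intervals of length $n^{-\theta}$. Under the dictionary $\alpha\leftrightarrow\sigma$, $\beta\leftrightarrow 1-1/\theta$ your relation $\alpha=\lambda/(p-1)-\beta/(\mu(p-1))$ and smallness condition $\beta<\epsilon$ are exactly the paper's $\sigma=\lambda/(p-1)-(1-1/\theta)/((p-1)\mu)$ with $1-1/\theta<\epsilon$, and your critical smoothness $\alpha+\beta/\rho$ for $\bar g$ (resp.\ $(p-1)\alpha+\beta/\rho$ for $A(\bar g)$) reproduces the paper's $s_\rho$ (resp.\ $\tilde s_\rho$). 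Your atomic/difference bookkeeping replaces the paper's rather lengthy direct $L_\rho$–modulus computation in \autoref{lem:Regw}; both give the same sharp answer.

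The point where your sketch genuinely diverges, and where it is too quick, is the passage to $d\ge 2$. The paper lifts radially, $u(x)=u_{\sigma,\theta}(|x|_2)$, which has the crucial algebraic consequence $A(\nabla u_{\sigma,\theta,d})=\nabla u_{(p-1)\sigma,\theta,d}$ (\autoref{lem:p-poisson}\link{item:Div}): the vector field is again the gradient of a member of the \emph{same family}, so its regularity (and that of $f$) reduces to the already–analysed scalar case via \autoref{lem:rotation_reg}. Your tensor–cutoff $u(x)=\bar u(x_1)\eta(x)$ does not have this property. In the transition region $\{\nabla\eta\neq 0\}$ one has $\nabla u=(\bar g\,\eta+\bar u\,\partial_1\eta,\;\bar u\,\nabla'\eta)$, and $A(\nabla u)$ is a genuinely nonlinear mixture of $\bar g$, $\bar u$ and $\eta$ --- it is \emph{not} a smooth perturbation of $(A(\bar g)(x_1),0,\dots,0)$, contrary to your claim that ``the $\eta$-corrections are smooth''. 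This matters for the \emph{upper} bounds, specifically for $A(\nabla u)\in(B^{\lambda}_{\mu,\infty})^d$ and hence $f\in B^{-1+\lambda}_{\mu,\infty}$: a naive composition argument only yields regularity $\alpha+\beta/\mu\approx\lambda/(p-1)$, not the required $(p-1)\alpha+\beta/\mu=\lambda$. The statement is in fact salvageable --- on each level-$k$ bump one still has $|A(\nabla u)|\lesssim a_k^{p-1}$ and $|\nabla A(\nabla u)|\lesssim a_k^{p-1}2^{k}$ (using $|\bar u|\lesssim a_k 2^{-k}\ll|\bar g|$ and $|DA(\xi)|\lesssim|\xi|^{p-2}$), which is enough for $C^1$-atoms at the correct scale --- but this is a real computation, not a triviality, and it is precisely the step your outline misidentifies as harmless while flagging only the lower bounds as the obstacle. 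The radial lift avoids the issue entirely.
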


Before we proceed some general comments are in order:
\begin{remark}\label{rem:general}
First of all, let us stress the point that, due to the compact support of $f$ and~$u$, \autoref{thm:main} is of \emph{local} nature.
 
Secondly, we note that the restriction to $\rho\geq 1$ is for notational convinience only. Using standard embeddings (see \autoref{lem:embedding}\link{item:smallerrho} below) and complex interpolation (see, e.g., Kalton \emph{et~al.} \cite[Theorem~5.2]{KMM07}) we can easily extend \link{eq:reg_u} by 
$$
	u \in B^{1+\frac{\lambda}{p-1}}_{\rho,q}(\Omega)  \;\setminus\; B^{1+\frac{\lambda}{p-1}+c_\rho\epsilon}_{\rho,q}(\Omega)
	\qquad \text{for all} \qquad 0<\rho<1 \qquad \text{and} \qquad  0<q\leq\infty
$$ 
with some $c_\rho \sim 1/\rho$. Likewise, the same arguments can be used to extend also \link{eq:reg_A}.
\hfill$\square$
\end{remark}

Observe that \autoref{thm:main} applied for $\mu:=p'$ indeed shows optimality of Savar{\'e}'s result in some sense:

\begin{corollary}
	In \autoref{prop:savare} the smoothness of $u$ w.r.t.\ $L_p$ cannot be improved without strengthening the assumptions on~$f$. 
\end{corollary}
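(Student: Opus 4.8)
The plan is to feed \autoref{thm:main} with the parameter choice $\mu:=p'$ and exploit the arithmetic identity $\mu(p-1)=p'(p-1)=p$, which puts the integrability level $\rho=p$ exactly into the middle (``$\rho=\mu(p-1)$'') case of \link{eq:reg_u}. Fix the parameter $\lambda_0\in(0,1/p')$ occurring in \autoref{prop:savare} and let $s>1+\lambda_0/(p-1)$ be an arbitrary target smoothness. Since $\lambda_0<1/p'$ and $(s-1)(p-1)>\lambda_0$, the interval $\big(\lambda_0,\min\{(s-1)(p-1),\,1/p'\}\big)$ is non-empty; I would pick $\lambda$ in it and then $\epsilon\in\big(0,\min\{1/p,\,\lambda/(p-1)\}\big)$, so that $0<\epsilon<1/p$ and $\epsilon(p-1)<\lambda<1-\epsilon$, i.e.\ the hypotheses of \autoref{thm:main} hold. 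Applying the theorem then yields a compactly supported $f=f_{\lambda,p'}\in B^{-1+\lambda}_{p',\infty}(\Omega)\cap W^{-1}_{p'}(\Omega)$ whose weak solution $u$ to \link{eq:strong_ppoisson} satisfies, by the middle case of \link{eq:reg_u} with $\rho=p$, that $u\in B^{1+\lambda/(p-1)}_{p,\infty}(\Omega)$ but $u\notin B^{1+\lambda/(p-1)}_{p,q'}(\Omega)$ for every $0<q'<\infty$.

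It then remains to tie this counterexample to \autoref{prop:savare} through two standard Besov embeddings (cf.\ \autoref{lem:embedding}). On the data side, $\lambda_0<\lambda$ gives $B^{-1+\lambda}_{p',\infty}(\Omega)\hookrightarrow B^{-1+\lambda_0}_{p',p'}(\Omega)=W^{-1+\lambda_0}_{p'}(\Omega)$, so $f$ obeys the hypothesis of \autoref{prop:savare}. On the solution side, $s>1+\lambda/(p-1)$ gives $B^{s}_{p,q}(\Omega)\hookrightarrow B^{1+\lambda/(p-1)}_{p,1}(\Omega)$ for every $0<q\le\infty$; since $u\notin B^{1+\lambda/(p-1)}_{p,1}(\Omega)$ we get $u\notin B^{s}_{p,q}(\Omega)$ for all such $q$. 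As $s>1+\lambda_0/(p-1)$ was arbitrary, this shows that the conclusion $u\in W^{1+\lambda_0/(p-1)}_p(\Omega)$ of \autoref{prop:savare} cannot be strengthened to membership in any Besov space over $L_p$ of larger smoothness while only assuming $f\in W^{-1+\lambda_0}_{p'}(\Omega)$.

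I do not anticipate a genuine obstacle here; the one point requiring care is the logical bookkeeping, since \autoref{thm:main} produces data of smoothness $-1+\lambda$ with $\lambda$ \emph{strictly} above $\lambda_0$ (and with the unfavourable fine index $q=\infty$) rather than data lying exactly in $W^{-1+\lambda_0}_{p'}(\Omega)$. Inserting the intermediate embedding above resolves this, and one must additionally make sure the admissible windows for $\lambda$ and $\epsilon$ stay non-empty — which is precisely where the strict inequality $\lambda_0<1/p'$ built into \autoref{prop:savare} is used.
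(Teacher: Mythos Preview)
Your argument for the first implication in \autoref{prop:savare} (the case $\lambda\in(0,1/p')$) is correct and is essentially the same as the paper's: both fix $\mu:=p'$ so that $\mu(p-1)=p$ lands in the middle line of \link{eq:reg_u}, pick a value of $\lambda$ strictly above the target smoothness parameter, and then use the elementary embedding $B^{-1+\lambda}_{p',\infty}\hookrightarrow B^{-1+\lambda_0}_{p',p'}=W^{-1+\lambda_0}_{p'}$ on the data side together with $B^{s}_{p,q}\hookrightarrow B^{1+\lambda/(p-1)}_{p,1}$ on the solution side. Your bookkeeping for the admissible windows of $\lambda$ and $\epsilon$ is fine.

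What is missing is the second implication in \autoref{prop:savare}, namely the borderline statement
\[
f \in B_{p',1}^{-1+1/p'}(\Omega) \;\Longrightarrow\; u \in B_{p,\infty}^{1+\frac{1/p'}{p-1}}(\Omega).
\]
The corollary asserts optimality for \emph{both} conclusions of \autoref{prop:savare}, and the paper treats this endpoint separately: one now takes $\lambda:=1/p'+\delta$ with $\delta>0$ small (so $\lambda$ exceeds $1/p'$, which is allowed in \autoref{thm:main} as long as $\lambda<1-\epsilon$), uses $B^{-1+\lambda}_{\mu,\infty}\hookrightarrow B^{-1+1/p'}_{p',1}$ on the data side, and concludes $u\notin B^{1+\lambda/(p-1)}_{p,q}(\Omega)\hookleftarrow B^{1+(1/p')/(p-1)+\delta}_{p,\infty}(\Omega)$. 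Your restriction $\lambda<1/p'$ prevents you from reaching this case; simply relaxing that upper bound and repeating your embedding argument handles it.
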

\begin{proof}
	 Choosing $\lambda:=\tilde{\lambda}+(p-1)\delta$ with some $\tilde{\lambda}\in(0,1/p')$ and $\delta>0$ arbitrarily small, \autoref{thm:main} allows to find a right-hand side $f\in B_{\mu,\infty}^{-1+\lambda}(\Omega) \hookrightarrow B_{p',p'}^{-1+\tilde{\lambda}}(\Omega)=W_{p'}^{-1+\tilde{\lambda}}(\Omega)$ such that the corresponding solution of the Dirichlet problem for the $p$-Poisson equation satisfies $u \notin B_{p,p}^{1+\frac{\tilde{\lambda}}{p-1}+\delta}(\Omega)=W_{p}^{1+\frac{\tilde{\lambda}}{p-1}+\delta}(\Omega)$. 
	Similarly for $\lambda:=1/p'+\delta$ with $\delta>0$, there exists $f\in B_{\mu,\infty}^{-1+\lambda}(\Omega) \hookrightarrow B_{p',1}^{-1+\frac{1}{p'}}(\Omega)$ such that $u \notin B_{p,q}^{1+\frac{\lambda}{p-1}}(\Omega) \hookleftarrow B_{p,\infty}^{1+\frac{1/p'}{p-1}+\delta}(\Omega)$. In view of \autoref{rem:general}, these examples remain valid also on smooth domains.
\end{proof}

Furthermore, \autoref{thm:main} shows that regarding regularity questions it seems better to look at the the mapping $f\mapsto A(\nabla u)$, rather than $f\mapsto u$. In fact, in view of the case $\rho=\mu$ in~\link{eq:reg_A}, one might conjecture the existence of a $p$-independent mechanism which (for some range of parameters) locally transfers exactly one order of regularity from the right-hand side~$f$ to the naturally associated vector filed $A(\nabla u)$. For the case $d=2$ this already has been verified in \cite{BalDieWei2018}. In other words, \autoref{thm:main} shows that also their results cannot be improved.

\autoref{thm:main} is complemented by
\begin{theorem}\label{thm:mainL}	
	Let $d\in \N$ and $2 < p<\infty$. Further let $\Omega \subseteq \R^d$ be either $\R^d$ itself, a bounded Lipschitz domain, or an interval (if $d=1$). Moreover, let $0<\epsilon<\min\{1/(p-1), 1-1/(p-1)\}$. 
	Then for all $1<\mu \leq \infty$ there exists a compactly supported right-hand side 
	$$
		f=f_\mu \in L_\nu(\Omega)\cap W^{-1}_{p'}(\Omega) \quad \text{for all} \quad 
		\begin{cases}
			0<\nu < \mu & \quad \text{if} \quad \mu <\infty, \\
			0<\nu \leq \infty & \quad \text{if} \quad \mu =\infty,
		\end{cases}
	$$
	such that the corresponding weak solution $u \in W^1_{p,0}(\Omega)$ to \link{eq:strong_ppoisson} is compactly supported as well and satisfies \link{eq:reg_u} with $\lambda=1$. Moreover, then for $1<\rho<\infty$ there holds
	$$
		A(\nabla u) \in \big( W^1_\rho(\Omega) \big)^d \qquad \text{if and only if} \qquad	\rho < \mu.
	$$
\end{theorem}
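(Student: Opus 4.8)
The plan is to re-examine the constructive proof of \autoref{thm:main} and carry it out at the endpoint $\lambda=1$. One cannot simply invoke \autoref{thm:main}, whose hypotheses force $\epsilon(p-1)<\lambda<1-\epsilon$ and thus exclude $\lambda=1$; nor does a superposition over a sequence $\lambda_k\uparrow1$ help, since the right-hand side obtained by gluing the corresponding negative-order distributions $f^{(k)}\in B^{-1+\lambda_k}_{\mu,\infty}(\Omega)$ on disjoint supports would again be a distribution of negative order, not a locally integrable function. The construction itself has to be redone.

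Recall that the proof of \autoref{thm:main} builds the solution as a superposition $u=\sum_i u_i$ of rescaled and translated copies of a fixed compactly supported building block, arranged with pairwise disjoint supports. Since the $p$-Laplacian acts locally, $-\Delta_p(u)=\sum_i\bigl(-\Delta_p(u_i)\bigr)=:f$ holds term by term, and the scaling of the atoms is tuned so that the target differentiability of $f$ is $-1+\lambda$. First I would specialize this to $\lambda=1$: the building block and the $\lambda$-dependent scaling parameters stay well defined, $u$ and $f$ remain compactly supported in $\Omega$, and the verifications $u\in W^1_{p,0}(\Omega)$ and $f\in W^{-1}_{p'}(\Omega)$ go through as before. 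The only structural change is that at $\lambda=1$ the differentiability of $f$ is $0$, so $f$ is no longer a distribution of negative order but an honest locally integrable function; accordingly the assertion ``$f\in B^{-1+\lambda}_{\mu,\infty}(\Omega)$'' is replaced by ``$f\in L_\nu(\Omega)$ for all $\nu<\mu$'', which is what must now be shown.

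The substance of the argument is that the scaling bookkeeping still closes at $\lambda=1$. I would (i) estimate $\norm{f \sep L_\nu(\Omega)}$ directly: on the atoms of a given scale $f$ scales like a fixed power of that scale, so $\norm{f \sep L_\nu(\Omega)}^\nu$ is a geometric-type series over the scales which, with the parameters of \autoref{thm:main} set to $\lambda=1$, converges precisely for $\nu<\mu$ --- only this convergence (not the borderline divergence at $\nu=\mu$) is needed, and it is exactly here that the threshold $\mu$ enters. And (ii) re-prove \link{eq:reg_u} and \link{eq:reg_A} with $\lambda=1$ by repeating verbatim the Besov estimates of \autoref{thm:main}. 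The hypothesis $0<\epsilon<\min\{1/(p-1),\,1-1/(p-1)\}$ takes over the role of the two-sided bound $\epsilon(p-1)<\lambda<1-\epsilon$: with $\lambda=1$ one needs the smoothness window $\bigl(1+\tfrac1{p-1}-\epsilon,\,1+\tfrac1{p-1}+\epsilon\bigr)$ for $u$, the window $(1-\epsilon,1+\epsilon)$ for $A(\nabla u)$, and the accompanying summability indices, to stay in the ranges in which the building block is a genuine but $W^1_p$-admissible singularity, and a short computation shows that $\epsilon<1/(p-1)$ together with $\epsilon<1-1/(p-1)$ is precisely what this requires. (Alternatively, the statement on $A(\nabla u)$ admits a direct proof: $\nabla\bigl(A(\nabla u)\bigr)$ is again a superposition of rescaled atoms and its $L_\rho$-norm is a series over the scales, convergent if and only if $\rho<\mu$.)

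Finally, the ``if and only if'' for $A(\nabla u)\in\bigl(W^1_\rho(\Omega)\bigr)^d$ is read off from \link{eq:reg_A} with $\lambda=1$ via the standard embeddings $B^1_{\rho,\min\{\rho,2\}}(\Omega)\hookrightarrow W^1_\rho(\Omega)\hookrightarrow B^1_{\rho,\max\{\rho,2\}}(\Omega)$, valid for $1<\rho<\infty$ (the identity $W^1_\rho=B^1_{\rho,\rho}$ would apply only to non-integer smoothness). For $\rho<\mu$, \link{eq:reg_A} gives $A(\nabla u)\in\bigl(B^1_{\rho,q}(\Omega)\bigr)^d$ for every $q$, in particular for $q=\min\{\rho,2\}$, hence $A(\nabla u)\in\bigl(W^1_\rho(\Omega)\bigr)^d$. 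For $\mu\le\rho<\infty$, so that $\max\{\rho,2\}<\infty$, membership $A(\nabla u)\in\bigl(W^1_\rho(\Omega)\bigr)^d$ would force $A(\nabla u)\in\bigl(B^1_{\rho,\max\{\rho,2\}}(\Omega)\bigr)^d$, contradicting \link{eq:reg_A}, which for $\rho>\mu$ excludes every $B^1_{\rho,q}$ and for $\rho=\mu$ excludes every $B^1_{\rho,q}$ with $q<\infty$. I expect the genuine obstacle to be the endpoint bookkeeping in step (ii): one must check that the ``no gain in the integrability parameter'' mechanism of \autoref{thm:main} --- the effect of distributing the singularity over all scales, which caps the Besov smoothness of $u$ at $1+\tfrac1{p-1}$ in the adaptivity scale and keeps $A(\nabla u)$ out of $W^1_\mu$ --- survives the passage to $\lambda=1$, and that the borderline non-memberships in \link{eq:reg_u} and \link{eq:reg_A} stay sharp there. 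Everything else is a routine specialization of \autoref{subsect:proof-main}.
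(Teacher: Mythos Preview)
Your overall strategy---specialize the explicit construction from \autoref{subsect:proof-main} to $\lambda=1$ and re-verify each claim---is the paper's strategy too, and the parts concerning \link{eq:reg_u}, the compact supports, $f\in W^{-1}_{p'}(\Omega)$, and $f\in L_\nu(\Omega)$ go through essentially as you say.

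There is, however, a real gap in your treatment of $A(\nabla u)$. You propose to re-prove \link{eq:reg_A} at $\lambda=1$ ``by repeating verbatim the Besov estimates of \autoref{thm:main}'' and then extract the $W^1_\rho$ statement from it via the embeddings $B^1_{\rho,\min\{\rho,2\}}\hookrightarrow W^1_\rho\hookrightarrow B^1_{\rho,\max\{\rho,2\}}$. This route is blocked. In the paper's argument the Besov regularity of $A(\nabla u)$ is reduced (\autoref{lem:reg_u}\link{item:reg_A}) to that of the one-dimensional profile $w_{(p-1)\sigma,\theta}$, which is then obtained from \autoref{lem:Regw}\link{item:w_in_B}. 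That lemma requires $0<(p-1)\sigma<1/\theta$. With the parameters chosen at $\lambda=1$ one has $(p-1)\sigma=1-(1-1/\theta)/\mu$, and since $\mu>1$ this yields $(p-1)\sigma>1/\theta$ for every admissible $\mu$ (with $(p-1)\sigma=1$ when $\mu=\infty$). Hence the hypothesis of \autoref{lem:Regw}\link{item:w_in_B} fails; equivalently, the critical smoothness $\widetilde{s}_\rho=(p-1)\sigma+(1-1/\theta)/\rho$ is $\geq 1$ precisely for $\rho\leq\mu$, putting the needed Besov statements outside the first-order-difference range on which that lemma's proof rests. So \link{eq:reg_A} with $\lambda=1$ is not available from the existing lemmas, and you cannot feed it into the embedding argument.

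The paper avoids this entirely: the Sobolev clause of \autoref{lem:reg_u}\link{item:reg_A} reduces $A(\nabla u)\in(W^1_\rho(\Omega))^d$ directly to $w_{(p-1)\sigma,\theta}\in W^1_\rho(\R)$, i.e., to $w'_{(p-1)\sigma,\theta}\in L_\rho(\R)$, and \autoref{lem:Regw}\link{item:w'_in_L_rho} characterizes this completely. A one-line computation gives $(1-(p-1)\sigma)/(1-1/\theta)=1/\mu$, so the $L_\rho$ condition holds exactly when $\rho<\mu$ (and for all $\rho$ when $\mu=\infty$). This is precisely the ``alternative direct proof'' you mention parenthetically; it is not an alternative but the argument that actually works, and the one you should adopt as primary.
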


Here \autoref{rem:general} applies likewise. Moreover, also this result implies certain optimality statements:

\begin{remark}
At first, setting $\mu:=\rho:=\infty$ in \autoref{thm:mainL}, we recover the well-known assertion that for bounded right-hand sides the local H\"older regularity of the gradient $\nabla u$ of solutions to the $p$-Poisson equation \link{eq:strong_ppoisson} with $p>2$ is bounded by $1/(p-1)$.

Secondly, in 
\cite{DahDieHar+2016} it has been shown that for $p>2$, bounded Lipschitz domains $\Omega\subset\R^2$, and right-hand sides $f\in L_\infty(\Omega)$ the unique solution $u\in W^1_{p,0}(\Omega)$ to \link{eq:strong_ppoisson} satisfies
$$
	u\in B^\sigma_{\tau_\sigma,\tau_\sigma}(\Omega) \qquad \text{for all} \qquad 0<\sigma < \overline{\sigma} := 1+\frac{1}{p-1} \qquad \text{and} \qquad \frac{1}{\tau_\sigma}=\frac{\sigma}{2}+\frac{1}{p}. 
$$
In view of \autoref{thm:mainL} and \autoref{rem:general}, $\overline{\sigma}$ cannot be replaced by any larger number. 
\hfill$\square$
\end{remark}

The rest of this note is devoted to the proofs of \autoref{thm:main} and \autoref{thm:mainL}, respectively. \autoref{subsect:prep} collects some quite technical preparations. Afterwards, the statements are proven easily in \autoref{subsect:proof-main} and \autoref{subsect:proof-mainL}. 

\textbf{Notations:}
In the sequel $\N$ denotes the natural numbers without zero and we use $\R_+$ for the set of strictly positive reals. For families $\{a_{j} \sep j\in \J\}$ and $\{b_{j} \sep j\in \J\}$ of non-negative reals over a common index set $\J$ we write $a_{j} \lesssim b_{j}$ if there exists a constant $c>0$ (independent of the context-dependent parameters $j$) such that
$	
	a_{j} \leq c\cdot b_{j}
$
holds uniformly in $j\in\J$.
Consequently, $a_{j} \sim b_{j}$ means $a_{j} \lesssim b_{j}$ and $b_{j} \lesssim a_{j}$. In addition, the symbol $\hookrightarrow$ is used to denote continuous embeddings. 

\section{Proofs}\label{sect:proof}
Our main proofs given in \autoref{subsect:proof-main} and \autoref{subsect:proof-mainL} below require some preparations. The basic idea will be based on a construction given by Simon \cite[Sect.\ 4]{Sim1979}.
\subsection{Preparations}\label{subsect:prep}
For $1<\theta <\infty$ define the sequence
$$
a_{n,\theta}:=4 \sum_{j=1}^{n-1} j^{-\theta}, \qquad n\in\N \setminus\{1\}.
$$
Then for all $n\geq 3$
$$
4=a_{2,\theta} < \ldots < a_{n,\theta}< a_{n+1,\theta}=a_{n,\theta}+4 \,n^{-\theta} < \ldots < a_{\infty,\theta}:=\lim_{n\to\infty} a_{n,\theta}=4\,\zeta(\theta)<\infty.
$$
Further, with $\sigma \in \R_+$ let $w_{\sigma,\theta} \colon \R \to [0,\infty)$ be defined piecewise by
$$
	w_{\sigma,\theta}(\xi) := \left\{\begin{array}{lcrl}
	(\xi-a_{n,\theta})^\sigma 
		&\quad\text{if}&  a_{n,\theta} \leq \xi \!\!\!&< a_{n,\theta} + n^{-\theta}, \\
	n^{-\theta\sigma}  
		&\quad\text{if}&  a_{n,\theta} + n^{-\theta} \leq \xi \!\!\!&< a_{n,\theta} + 2\,n^{-\theta}, \\
	(a_{n,\theta} + 3\, n^{-\theta}-\xi)^\sigma  
		&\quad\text{if}&  a_{n,\theta} + 2\,n^{-\theta} \leq \xi \!\!\!&< a_{n,\theta} + 3\,n^{-\theta}, \\
	0  
		&\quad\text{if}&  a_{n,\theta} + 3\,n^{-\theta} \leq \xi \!\!\!&< a_{n+1,\theta}, 
	\end{array}\right. 
$$
on $[a_{n,\theta}, a_{n+1,\theta})$, $n\geq 2$, and
$$
w_{\sigma,\theta}(\xi):=0 \quad \text{on} \quad \R\setminus [a_{2,\theta},a_{\infty,\theta}).
$$
Moreover, let us define $\mathscr{S}_\theta:=[4,4\,\zeta(\theta)]\subset\R$, as well as the set of transition points
$$
\mathscr{P}_{\theta}:=\big\{\xi \in [a_{2,\theta},a_{\infty,\theta}) \sep \xi=a_{n,\theta}+k\,n^{-\theta} \text{ for some } n\geq 2 \text{ and } k\in\{0,\ldots,3\} \big\} \cup \{a_{\infty,\theta}\} \subset \R.
$$

\begin{lemma}[Properties of $w_{\sigma,\theta}$]\label{lem:prop_w}
Let $\sigma\in \R_+$ and $1<\theta <\infty$, as well as $0<\rho\leq \infty$. Then
\begin{enumerate}[label=(\roman*),ref=\roman*]
	\item \label{item:supp_w} $w_{\sigma,\theta}$ is continuous with compact support
	$\supp (w_{\sigma,\theta}) \subseteq \mathscr{S}_\theta$. 	
	\item \label{item:countableT} $\mathscr{P}_\theta$ is countable and $w_{\sigma,\theta}$ is continuously differentiable on $\R\setminus \mathscr{P}_\theta$, i.e., $w_{\sigma,\theta}'$ exists a.e.
	\item \label{item:wgammasigma} $w_{\gamma\sigma,\theta}=(w_{\sigma,\theta})^{\gamma-1} w_{\sigma,\theta}$ for all $\gamma \in\R_+$.
	\item \label{item:bound_w} $0\leq w_{\sigma,\theta}(\xi) \leq 2^{-\sigma\theta}$ for all $\xi\in \R$.
\end{enumerate}
\end{lemma}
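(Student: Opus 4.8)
\textbf{Proof plan for \autoref{lem:prop_w}.}

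The plan is to verify the four assertions directly from the explicit piecewise definition of $w_{\sigma,\theta}$, treating each of the four ``building blocks'' on the interval $[a_{n,\theta}, a_{n+1,\theta})$ separately and then checking how consecutive blocks (and consecutive $n$'s) glue together. First, for \link{item:supp_w}, I would note that $w_{\sigma,\theta}$ vanishes identically outside $[a_{2,\theta}, a_{\infty,\theta}) = [4, 4\zeta(\theta))$ by definition, and since $a_{\infty,\theta} = 4\zeta(\theta) < \infty$ this interval is bounded, so its closure is the compact set $\mathscr{S}_\theta = [4, 4\zeta(\theta)]$; hence $\supp(w_{\sigma,\theta}) \subseteq \mathscr{S}_\theta$. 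For continuity, within the interior of each of the four pieces on $[a_{n,\theta}, a_{n+1,\theta})$ the function is manifestly continuous (a shifted power $\xi \mapsto (\xi - c)^\sigma$ with nonnegative argument, a constant, or a reflected power); at the three interior transition points $a_{n,\theta} + k n^{-\theta}$, $k=1,2,3$, the one-sided limits agree — at $k=1$ both sides give $n^{-\theta\sigma}$, at $k=2$ both give $n^{-\theta\sigma}$, at $k=3$ both give $0$ — and at the junction $\xi = a_{n+1,\theta}$ the left limit from the last piece of block $n$ is $0$, which matches the value $(\xi - a_{n+1,\theta})^\sigma = 0$ at the start of block $n+1$. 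Continuity at $\xi = 4$ and at $\xi = 4\zeta(\theta)$ follows since $w_{\sigma,\theta}$ approaches $0$ there (at the left endpoint $(\xi - a_{2,\theta})^\sigma \to 0$; at the right endpoint one uses that the last nonzero block before $a_{n+1,\theta}$ tends to $0$ and $a_{n,\theta} \to a_{\infty,\theta}$).

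For \link{item:countableT}, the set $\mathscr{P}_\theta$ is a countable union (over $n \geq 2$) of the four-point sets $\{a_{n,\theta} + k n^{-\theta} : k \in \{0,\dots,3\}\}$ together with the single point $a_{\infty,\theta}$, hence countable. Differentiability on $\R \setminus \mathscr{P}_\theta$: on $\R \setminus \mathscr{S}_\theta$ the function is locally constant ($\equiv 0$); on each of the four open pieces the derivative exists and is continuous (for the power pieces, $\frac{\d}{\d\xi}(\xi - c)^\sigma = \sigma(\xi-c)^{\sigma-1}$, which is continuous on the open piece even when $\sigma < 1$ since the argument stays bounded away from $0$ there). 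The only points where differentiability could fail are exactly the transition points in $\mathscr{P}_\theta$ (where, e.g., for $\sigma < 1$ the left/right derivatives of the power pieces blow up or mismatch the constant piece), so $w_{\sigma,\theta}'$ exists everywhere off the countable set $\mathscr{P}_\theta$ and is therefore defined a.e.

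Assertion \link{item:wgammasigma} is a pointwise identity: on each piece where $w_{\sigma,\theta}(\xi) = (\xi - c)^\sigma$ one has $(w_{\sigma,\theta}(\xi))^{\gamma-1} w_{\sigma,\theta}(\xi) = (\xi-c)^{(\gamma-1)\sigma}(\xi-c)^\sigma = (\xi-c)^{\gamma\sigma} = w_{\gamma\sigma,\theta}(\xi)$, using that $\xi - c \geq 0$ on that piece so the power laws are valid; on the constant piece $(n^{-\theta\sigma})^{\gamma-1} n^{-\theta\sigma} = n^{-\theta\gamma\sigma} = w_{\gamma\sigma,\theta}(\xi)$; and where $w_{\sigma,\theta} = 0$ both sides vanish (here one needs $\gamma > 0$, which is given, so $0^{\gamma-1}\cdot 0$ is interpreted as $0$ via the convention $0^\gamma = 0$). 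Finally, for \link{item:bound_w}, nonnegativity is built into the definition, and for the upper bound I would observe that on block $n$ the maximum of $w_{\sigma,\theta}$ is attained on the constant piece with value $n^{-\theta\sigma}$ (the power pieces rise from $0$ to $n^{-\theta\sigma}$ and back down, never exceeding $n^{-\theta\sigma}$, since their argument ranges over $[0, n^{-\theta}]$ and $x \mapsto x^\sigma$ is monotone); hence $\sup_\xi w_{\sigma,\theta}(\xi) = \sup_{n \geq 2} n^{-\theta\sigma} = 2^{-\theta\sigma}$, attained at $n=2$. The parameter $\rho$ listed in the hypotheses plays no role in this lemma and is presumably carried along for consistency with later statements.

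I do not anticipate a genuine obstacle here: the lemma is a collection of routine verifications from an explicit formula. The only point requiring a little care is the behavior at the transition points for $\sigma < 1$ in part \link{item:countableT} — checking that differentiability genuinely fails there (so that ``$\R \setminus \mathscr{P}_\theta$'' is the right domain) rather than merely that the formula is piecewise — and the bookkeeping of one-sided limits in part \link{item:supp_w} to confirm continuity across all the seams, especially the accumulation of the blocks at $a_{\infty,\theta}$.
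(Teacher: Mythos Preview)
Your proposal is correct and follows the same approach as the paper: direct verification from the explicit piecewise definition, with the key observation for \link{item:bound_w} that $0\leq w_{\sigma,\theta}(\xi)\leq n^{-\sigma\theta}$ on each block $[a_{n,\theta},a_{n+1,\theta}]$. The paper's own proof is a one-line ``obvious from the definition'' together with exactly this remark, so you have simply spelled out the details the paper omits.
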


\begin{proof}
All statements are obvious consequences of the definition of $w_{\sigma,\theta}$. In order to
see \link{item:bound_w}, note that $0\leq w_{\sigma,\theta}(\xi) \leq n^{-\sigma\theta}$ on $[a_{n,\theta}, a_{n+1,\theta}]$ with $n\geq 2$.
\end{proof}

In the sequel, we will need sharp regularity assertions for $w_{\sigma,\theta}$. Before we state and prove them, let us recall some well-known embedding results for Besov spaces which are proven here for the sake of completeness.

\begin{prop}\label{lem:embedding}
For $d\in \N$ let $\Omega \subseteq \R^d$ be either $\R^d$ itself, a bounded Lipschitz domain, or an interval (if $d=1$). Further let $0<\rho,\rho_0,\rho_1,q,q_0,q_1\leq \infty$ and $s,s_0,s_1\in\R$. Then
\begin{enumerate}[label=(\roman*),ref=\roman*]
\item \label{item:lift} $B^{1+s}_{\rho,q}(\Omega)=\big\{g \in B^{s}_{\rho,q}(\Omega) \sep \nabla g \in (B^{s}_{\rho,q}(\Omega))^d\big\}$ with 
$$
	\norm{g \sep B^{1+s}_{\rho,q}(\Omega)} \sim 
	\sum_{\substack{\alpha\in\N_0^d:\\ \abs{\alpha}_1\leq 1}} \norm{D^\alpha g \sep B^{s}_{\rho,q}(\Omega)}.
$$
\item \label{item:smallers} If $s_1<s_0$, then $g\in B^{s_0}_{\rho,q_0}(\Omega)$ implies $g\in B^{s_1}_{\rho,q_1}(\Omega)$.
\item \label{item:smallerrho} If $\rho_1\leq \rho_0$ and $g\in B^{s}_{\rho_0,q}(\Omega)$ has compact support in $\R^d$, then $g\in B^{s}_{\rho_1,q}(\Omega)$.
\end{enumerate}
\end{prop}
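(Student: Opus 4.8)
The plan is to reduce all three assertions to the base case $\Omega=\R^d$ and to argue there with a Littlewood--Paley decomposition, since for arbitrary $s\in\R$ and $0<\rho,q\leq\infty$ the elementary difference characterisation recalled in the opening remark on function spaces is not available. Fix such a decomposition $\{\varphi_j\}_{j\geq0}$ of unity on $\R^d$, so that $\norm{g\sep B^s_{\rho,q}(\R^d)}$ is equivalent to the $\ell_q$-quasinorm of $\big(2^{js}\,\norm{\varphi_j(D)g\sep L_\rho(\R^d)}\big)_{j\geq0}$. For a bounded Lipschitz domain $\Omega$ the reduction rests on a \emph{universal} bounded linear extension operator $\Ext\colon B^s_{\rho,q}(\Omega)\to B^s_{\rho,q}(\R^d)$ --- one and the same map for all admissible $(s,\rho,q)$ --- together with the restriction map $\Res$, which is bounded from $B^s_{\rho,q}(\R^d)$ onto $B^s_{\rho,q}(\Omega)$ by the very definition of the latter and satisfies $\Res\circ\Ext=\Id$; see \cite{T06,T08}. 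The case $\Omega=\R^d$ is immediate, and a (possibly unbounded) interval in dimension $d=1$ is either $\R$ itself or is covered by the same extension/restriction scheme. Granting these tools, each of \link{item:lift}--\link{item:smallerrho} reduces to its $\R^d$-version; for \link{item:smallerrho} one first multiplies $g$ by a fixed cut-off $\chi\in\D(\R^d)$ with $\chi\equiv1$ near $\supp g$, so that we may assume from the outset $\Omega=\R^d$ and $\supp g$ bounded.

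On $\R^d$, assertion \link{item:smallers} is essentially a statement about sequence spaces: factoring $2^{js_1}=2^{-j(s_0-s_1)}\,2^{js_0}$ one bounds $\norm{g\sep B^{s_1}_{\rho,q_1}(\R^d)}$ by $\norm{(2^{-j(s_0-s_1)})_j\sep\ell_{q_1}}\cdot\sup_j 2^{js_0}\norm{\varphi_j(D)g\sep L_\rho}$, which is finite since $s_0>s_1$, and the supremum is at most $\norm{g\sep B^{s_0}_{\rho,q_0}(\R^d)}$ because $\ell_{q_0}\hookrightarrow\ell_\infty$. For \link{item:lift} on $\R^d$, the inequality $\sum_{\abs{\alpha}_1\leq1}\norm{D^\alpha g\sep B^s_{\rho,q}(\R^d)}\lesssim\norm{g\sep B^{1+s}_{\rho,q}(\R^d)}$ follows from Bernstein's inequality for band-limited functions, $\norm{\partial_l\varphi_j(D)g\sep L_\rho}\lesssim 2^j\norm{\varphi_j(D)g\sep L_\rho}$, applied blockwise. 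For the converse one writes, on the spectral annulus $\{\abs{\xi}\sim 2^j\}$ of the $j$-th block, the constant $2^j$ as a finite combination of the symbols $\mathrm{i}\xi_1,\dots,\mathrm{i}\xi_d$ and $1$ with coefficient multipliers that are uniformly (in $j$) of Mikhlin type, and applies the Fourier multiplier theorem for functions with compact spectrum --- valid throughout the quasi-Banach range, see \cite{T83} --- to obtain $2^j\norm{\varphi_j(D)g\sep L_\rho}\lesssim\norm{\varphi_j(D)g\sep L_\rho}+\sum_l\norm{\varphi_j(D)\partial_l g\sep L_\rho}$ uniformly in $j$; taking the weighted $\ell_q$-quasinorm finishes the argument. (Alternatively, on a Lipschitz domain the lift property may be quoted directly from \cite{T06,T08}.)

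It remains to prove \link{item:smallerrho} on $\R^d$ for $g$ with $\supp g\subseteq K$ bounded; fix a ball $B\supseteq K$ and the concentric doubled ball $2B$. On $2B$, which has finite Lebesgue measure, Hölder's inequality (available since $\rho_1\leq\rho_0$) gives $\norm{\varphi_j(D)g\sep L_{\rho_1}(2B)}\lesssim_{B}\norm{\varphi_j(D)g\sep L_{\rho_0}(\R^d)}$ with a $j$-independent constant. Off $2B$ the block $\varphi_j(D)g=g*(\varphi_j)^\vee$ is no longer supported in $K$, but since $g$ is a compactly supported distribution (of some finite order) and $(\varphi_j)^\vee$ is a Schwartz function with arbitrarily good decay, $\norm{\varphi_j(D)g\sep L_{\rho_1}(\R^d\setminus 2B)}\lesssim_{N}2^{-jN}$ for every $N$, which is harmless once the weighted $\ell_q$-quasinorm is taken. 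Adding the two contributions yields $\norm{g\sep B^s_{\rho_1,q}(\R^d)}\lesssim\norm{g\sep B^s_{\rho_0,q}(\R^d)}$. The point where the hypothesis genuinely enters --- and hence the main obstacle --- is exactly here: $B^s_{\rho_0,q}\hookrightarrow B^s_{\rho_1,q}$ is false on all of $\R^d$, so one must localise, and the technical heart is the uniform decay estimate for the band-limited pieces of a distribution with bounded support; on general Lipschitz domains a second, non-elementary ingredient is the existence of the simultaneous extension operator used in the reduction step.
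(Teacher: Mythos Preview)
Your argument is correct and self-contained, but it proceeds along a different path from the paper. The paper quotes \link{item:lift} directly from \cite[Prop.~4.21]{T08}, and for \link{item:smallers}--\link{item:smallerrho} it passes to $\R^d$ via Rychkov's extension operator and then invokes the wavelet isomorphism $B^s_{\rho,q}(\R^d)\cong b^s_{\rho,q}(\nabla)$ from \cite{T06}, so that both embeddings become purely sequence-space statements taken from \cite{Wei2016}; in particular, \link{item:smallerrho} is obtained by observing that the compact support of $g$ forces the wavelet coefficients to live over a bounded index set, where $b^s_{\rho_0,q}\hookrightarrow b^s_{\rho_1,q}$ holds trivially. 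Your route stays on the Fourier side throughout: \link{item:lift} via Bernstein and Mikhlin-type multipliers on spectral annuli, \link{item:smallers} by the obvious $\ell_q$ manipulation, and \link{item:smallerrho} by a near/far splitting --- H\"older on the doubled ball plus a uniform Schwartz-tail decay $\norm{\varphi_j(D)g\sep L_{\rho_1}(\R^d\setminus 2B)}\lesssim_N 2^{-jN}$ coming from the finite order of $g\in\mathcal{E}'(\R^d)$. This last step is the genuinely new ingredient in your version; it replaces the wavelet localisation by an explicit convolution estimate. The trade-off is that the paper's proof is shorter once one accepts the wavelet black boxes, whereas yours is more elementary and makes the role of the compact-support hypothesis fully transparent without importing the discrete machinery.
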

\begin{proof}
	Assertion \link{item:lift} is a special instance of \cite[Prop.\ 4.21]{T08}. So, let us prove \link{item:smallers} and \link{item:smallerrho}.
	By means of Rychkov's extension operator \cite{Ryc1999} we can w.l.o.g.\ assume that $\Omega=\R^d$. Further, let $c(g)$ denote the sequence of wavelet coefficients of $g$ w.r.t.\ a sufficiently smooth Daubechies wavelet system on $\R^d$. Then the wavelet isomorphism from \cite[Thm.\ 3.5]{T06} implies that $\norm{g \sep B^{s}_{\rho,q}(\R^d)} \sim \norm{c(g) \sep b^{s}_{\rho,q}(\nabla)}$ for all $0<\rho,q\leq \infty$ and $s\in\R$ with $b^{s}_{\rho,q}(\nabla)$ being suitable sequence spaces. 
Now \link{item:smallers} follows from the standard embedding $b^{s_0}_{\rho_0,q_0}(\nabla) \hookrightarrow b^{s_1}_{\rho_0,q_1}(\nabla)$ if $s_1<s_0$ which can be found, e.g., in \cite[Prop.\ 2.5]{Wei2016}. 

In order to prove \link{item:smallerrho}, we note that the compact support of $g$ implies that $\norm{c(g) \sep b^{s}_{p,q}(\nabla)}$ equals $\norm{c(g) \big|_{\widetilde{\nabla}} \sep b^{s}_{p,q}(\widetilde{\nabla})}$, where $b^{s}_{p,q}(\widetilde{\nabla})$ refers to the corresponding sequence space for some bounded domain. 
For these spaces there holds the embedding $b^{s_0}_{\rho_0,q_0}(\widetilde{\nabla}) \hookrightarrow b^{s_0}_{\rho_1,q_0}(\widetilde{\nabla})$ if $\rho_1\leq \rho_0$, see \cite[Prop.~2.5]{Wei2016} again.
\end{proof}
\newpage
\begin{lemma}[Regularity of $w_{\sigma,\theta}$]\label{lem:Regw} 
	Let $\sigma\in \R_+$ and $1<\theta <\infty$, as well as $0<\rho\leq \infty$. Then
	\begin{enumerate}[label=(\roman*),ref=\roman*]
		\item \label{item:w_in_L_rho} $w_{\sigma,\theta} \in L_{\rho}(\R)$,
		\item \label{item:w'_in_L_rho} $w_{\sigma,\theta}'\in L_\rho(\R)$ holds if and only if
			\begin{align}\label{cond:w'_in_L_rho}
				\sigma \geq 1,
				\qquad\qquad \text{or} \qquad\qquad 
				0<\sigma < 1 \quad \text{and} \quad \frac{1-\sigma}{1-1/\theta} < \frac{1}{\rho}.
			\end{align}
		\item \label{item:w_in_B} Additionally assume $0<\sigma<1/\theta<1$ and 
			\begin{align}\label{cond:rho}
				0 \leq \frac{1}{\rho} < \min\left\{ \theta\,(1+\sigma), \frac{1-\sigma}{1-1/\theta}\right\}.
			\end{align}
			Then $w_{\sigma,\theta} \in B^{s}_{\rho,q}(\R)$ holds if and only if
			$$
				s=\sigma+\frac{1-1/\theta}{\rho} \quad\text{and}\quad q=\infty, 
				\quad\qquad \text{or} \qquad\quad 
				s<\sigma+\frac{1-1/\theta}{\rho} \quad\text{and}\quad 0<q \leq\infty.
			$$	
	\end{enumerate}
\end{lemma}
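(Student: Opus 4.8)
I would handle the three parts separately, in increasing order of difficulty. Part \link{item:w_in_L_rho} is immediate from \autoref{lem:prop_w}\link{item:supp_w} and \link{item:bound_w}: $w_{\sigma,\theta}$ is bounded with compact support, hence lies in $L_\rho(\R)$ for all $0<\rho\le\infty$. For part \link{item:w'_in_L_rho} I would start from the explicit piecewise formula for $w_{\sigma,\theta}'$ on each interval $[a_{n,\theta},a_{n+1,\theta})$: it equals $\pm\sigma\,(\,\cdot\,)^{\sigma-1}$ on the two subintervals of length $n^{-\theta}$ carrying the rising resp.\ falling branch and vanishes on the remaining two. If $\sigma\ge1$, then $w_{\sigma,\theta}'$ is bounded with compact support and so lies in every $L_\rho(\R)$. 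If $0<\sigma<1$ and $\rho<\infty$, then $\norm{w_{\sigma,\theta}'\sep L_\rho(\R)}^\rho$ is comparable to $\sum_{n\ge2}\int_0^{n^{-\theta}}u^{(\sigma-1)\rho}\d u$, which is finite if and only if each single integral converges, i.e.\ $(\sigma-1)\rho+1>0$, \emph{and} the series $\sum_{n\ge2}n^{-\theta[(\sigma-1)\rho+1]}$ converges, i.e.\ $\theta[(\sigma-1)\rho+1]>1$; since $1-1/\theta<1$ the second condition implies the first and is equivalent to $\tfrac{1-\sigma}{1-1/\theta}<\tfrac1\rho$. For $\rho=\infty$ and $0<\sigma<1$ the derivative $w_{\sigma,\theta}'$ is unbounded near the transition points, matching the failure of \link{cond:w'_in_L_rho}. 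This settles \link{item:w'_in_L_rho}.

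For part \link{item:w_in_B} write $w:=w_{\sigma,\theta}=\sum_{n\ge2}w_n$, where $w_n$ is the single bump supported on $I_n:=[a_{n,\theta},a_{n+1,\theta})$, and record the self-similar structure $w_n=\delta_n^\sigma\,\phi\big((\,\cdot\,-a_{n,\theta})/\delta_n\big)$ with $\delta_n:=n^{-\theta}$ and a fixed compactly supported profile $\phi$ which behaves like $(\,\cdot\,)^\sigma$ near its two cusp points and is Lipschitz elsewhere. Put $s^*:=\sigma+\tfrac{1-1/\theta}{\rho}$. The first step is to observe that \link{cond:rho} is exactly tailored to the argument: the bound $\tfrac1\rho<\tfrac{1-\sigma}{1-1/\theta}$ gives $s^*<1$, while $\tfrac1\rho<\theta(1+\sigma)$ gives $s^*>\tfrac1\rho-1$; thus $0<s^*<1$ and $s^*>d\max\{0,1/\rho-1\}$ with $d=1$, so that the difference characterisation recalled in the introductory remark applies with $k=1$. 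In view of part \link{item:w_in_L_rho} and \autoref{lem:embedding}\link{item:smallers}, all of \link{item:w_in_B} then follows from the single sharp estimate
\begin{align*}
  \omega(t):=\sup_{\abs{h}\le t}\norm{\Delta_h^1 w\sep L_\rho(\R)}\sim t^{s^*}, \qquad 0<t\le\overline{t}.
\end{align*}
Indeed, $\omega(t)\lesssim t^{s^*}$ yields $w\in B^{s^*}_{\rho,\infty}(\R)$ and, by embedding, $w\in B^{s}_{\rho,q}(\R)$ for all $s<s^*$ and $0<q\le\infty$; whereas $\omega(t)\gtrsim t^{s^*}$ forces $\int_0^{\overline t}[t^{-s^*}\omega(t)]^q\,\tfrac{\d t}{t}=\infty$ for $q<\infty$ and $\sup_{0<t\le\overline t}t^{-s'}\omega(t)=\infty$ for every $s'>s^*$, hence $w\notin B^{s^*}_{\rho,q}(\R)$ for $q<\infty$ and, again by \autoref{lem:embedding}\link{item:smallers}, $w\notin B^{s}_{\rho,q}(\R)$ for $s>s^*$.

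It remains to prove $\omega(t)\sim t^{s^*}$ (I write the estimates for $\rho<\infty$; the case $\rho=\infty$ is analogous with suprema in place of $\rho$-th powers). For the upper bound, split the bumps according to whether $\delta_n\ge t$ or $\delta_n<t$. The functions $\Delta_h^1 w_n$ have supports contained in $I_n\cup(I_n-h)$, which overlap with multiplicity at most two, so $\norm{\Delta_h^1 w\sep L_\rho(\R)}^\rho\lesssim\sum_n\norm{\Delta_h^1 w_n\sep L_\rho(\R)}^\rho$ uniformly for $\abs h\le t$ (for $\rho<1$ this is just $\rho$-subadditivity). A scaling argument together with the elementary modulus of smoothness of $\phi$ gives $\sup_{\abs h\le t}\norm{\Delta_h^1 w_n\sep L_\rho(\R)}^\rho\lesssim\delta_n^{\sigma\rho+1}\min\{1,(t/\delta_n)^r\}$ with $r:=\min\{\rho,\sigma\rho+1\}$; summing the ``narrow'' bumps ($\delta_n<t$, i.e.\ $n>t^{-1/\theta}$) gives $\lesssim\sum_{n>t^{-1/\theta}}\delta_n^{\sigma\rho+1}\sim t^{\sigma\rho+1-1/\theta}=t^{\rho s^*}$ because $\theta(\sigma\rho+1)>1$, and summing the ``wide'' ones gives $\lesssim t^r\sum_{n\le t^{-1/\theta}}\delta_n^{\sigma\rho+1-r}\sim t^{\rho s^*}$ because $r>\rho s^*$, which is once more equivalent to $\tfrac1\rho<\tfrac{1-\sigma}{1-1/\theta}$. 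For the lower bound I would exploit the accumulation of the bumps at $a_{\infty,\theta}$: given small $t>0$, take the block of indices $m$ with $t/2\le\delta_m\le t$ (there are $\sim t^{-1/\theta}$ of them, located at distance $\sim t^{(\theta-1)/\theta}$ from $a_{\infty,\theta}$) and the shift $h=t\in[\delta_m,2\delta_m]$; on each gap $(a_{m,\theta}+3\delta_m,a_{m+1,\theta})$ the function $w$ vanishes, so $\Delta_t^1 w=w(\,\cdot\,+t)\ge0$ there with no cancellation, and for $m$ large a short case analysis of where the translated gap falls within the profile of $w_{m+1}$ shows $\norm{\Delta_t^1 w\sep L_\rho((a_{m,\theta}+3\delta_m,a_{m+1,\theta}))}^\rho\gtrsim\delta_m^{\sigma\rho+1}\sim t^{\sigma\rho+1}$; summing over the $\sim t^{-1/\theta}$ disjoint gaps of the block yields $\omega(t)^\rho\ge\norm{\Delta_t^1 w\sep L_\rho(\R)}^\rho\gtrsim t^{\sigma\rho+1-1/\theta}=t^{\rho s^*}$.

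The main obstacle is precisely this lower bound: the smoothness of $w$ is pushed below the ``single-cusp'' value $\min\{1,\sigma+1/\rho\}$ down to $s^*$ only because infinitely many bumps accumulate at $a_{\infty,\theta}$, so one genuinely has to activate a whole block of $\sim t^{-1/\theta}$ bumps simultaneously and rule out cancellation — hence the choice of shift that pushes the bump supports onto the neighbouring gaps, where $w$ is zero. An alternative, arguably cleaner, route is to pass to wavelet coefficients via the isomorphism $\norm{g\sep B^{s}_{\rho,q}(\R)}\sim\norm{c(g)\sep b^{s}_{\rho,q}(\nabla)}$ already used for \autoref{lem:embedding}: the bump $w_n$ contributes $O(1)$ significant coefficients of size $\sim\delta_n^\sigma$ at level $\sim\log_2(1/\delta_n)$ plus a cusp tail of coefficients $\sim2^{-j\sigma}$ at all finer levels $j$, so at each dyadic level $j$ there are $\sim2^{j/\theta}$ coefficients of modulus $\sim2^{-j\sigma}$, whence $\norm{c(w)\sep b^{s}_{\rho,q}(\nabla)}$ is comparable to $\big(\sum_j 2^{j(s-s^*)q}\big)^{1/q}$ — finite exactly in the asserted range of $(s,q)$ — which makes membership and non-membership transparent at the same time, at the cost of setting up the cusp-tail decomposition.
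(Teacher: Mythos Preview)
Your treatment of parts \link{item:w_in_L_rho} and \link{item:w'_in_L_rho} matches the paper's proof essentially verbatim. The interesting comparison is part \link{item:w_in_B}, where your argument is correct but organized differently from the paper's.

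For the \emph{upper bound} on $\omega(t)$ the paper proceeds more directly: it first establishes the pointwise H\"older estimate $\abs{w(x)-w(y)}\le 2\abs{x-y}^\sigma$ (your case $\rho=\infty$), then uses log-convexity of $L_\rho$-norms to reduce the general case to $0<\rho\le 1$, and for such $\rho$ estimates $\norm{\Delta_h w\sep L_\rho}$ on each of the four explicit subintervals of $[a_{n,\theta},a_{n+1,\theta})$ by hand, via H\"older's inequality and monotonicity of $w$. Your self-similar decomposition $w=\sum_n w_n$ with $w_n=\delta_n^\sigma\phi((\cdot-a_{n,\theta})/\delta_n)$ is more conceptual: one scaling reduces everything to the modulus of smoothness of the single profile $\phi$, and the bounded-overlap observation replaces the piecewise bookkeeping. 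Your route avoids the interpolation step and treats all $\rho$ uniformly; the paper's route avoids having to analyze the cusp contribution $\int_0^1\abs{(y+\eta)^\sigma-y^\sigma}^\rho\,\mathrm{d}y$ that underlies your exponent $r=\min\{\rho,\sigma\rho+1\}$. Both arrive at the same two-regime summation.

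For the \emph{lower bound} the paper's choice is slightly cleaner than yours. It takes \emph{all} indices $n\le N(h,\theta)\sim h^{-1/\theta}$ (so $h\le\delta_{n+1}$ throughout) and uses the gaps $\mathscr{R}_{n,\theta}$; because the shift is small relative to $\delta_{n+1}$, the translated gap lands entirely in the rising branch of the next bump and one gets the exact value $\norm{\Delta_h w\sep L_\rho(\mathscr{R}_{n,\theta})}^\rho=\int_0^h y^{\sigma\rho}\,\mathrm{d}y$, with no case analysis. Your version restricts to the band $\delta_m\in[t/2,t]$ and takes $h=t\in[\delta_m,2\delta_m]$, which forces the translated gap to straddle different pieces of $w_{m+1}$ and hence the ``short case analysis'' you flag; this works (the translated gap has length $\delta_m$ and sits inside the support of $w_{m+1}$ for large $m$, so it meets the plateau in a set of measure $\gtrsim\delta_{m+1}$), but the paper's small-shift trick sidesteps it entirely while still producing $\sim h^{-1/\theta}$ disjoint contributions.

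Your wavelet-coefficient alternative is a legitimate third route not pursued in the paper.
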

\begin{proof}
	In view of \autoref{lem:prop_w}\link{item:supp_w} assertion \link{item:w_in_L_rho} is obvious.

	Let us show \link{item:w'_in_L_rho}. 
Clearly $w_{\sigma,\theta}'\in L_\infty(\R)$ is equivalent to $\sigma \geq 1$. On the other hand, for $0<\rho<\infty$ we have
\begin{align*}
	\norm{w_{\sigma,\theta}'\sep L_\rho(\R)}^\rho
	&= \sum_{n=2}^\infty \left( \int_{a_{n,\theta}}^{a_{n,\theta}+n^{-\theta}} \abs{w_{\sigma,\theta}'(\xi)}^\rho \d\xi + \int_{a_{n,\theta}+2\,n^{-\theta}}^{a_{n,\theta}+3\,n^{-\theta}} \abs{w_{\sigma,\theta}'(\xi)}^\rho \d\xi \right) \\
	&= 2\,\sigma^\rho \, \sum_{n=2}^\infty \int_{0}^{n^{-\theta}} x^{(\sigma-1)\rho} \d x.
\end{align*}
The latter integral is finite if and only if $1+(\sigma-1)\rho > 0$. In this case there holds
\begin{align*}
	\norm{w_{\sigma,\theta}'\sep L_\rho(\R)}^\rho
	&\sim \sum_{n=2}^\infty n^{-\theta(1+(\sigma-1)\rho)} 
	= \zeta\big( \theta(1+(\sigma-1)\rho) \big)-1
\end{align*}
which is finite if only if the argument of the Riemann zeta function $\zeta$ is strictly larger than one.
Thus, for $0<\rho<\infty$ we have $w_{\sigma,\theta}'\in L_\rho(\R)$ if and only if $1+(\sigma-1)\rho > 0$ and $\theta(1+(\sigma-1)\rho)>1$ which is equivalent to 
\begin{align}\label{eq:proof_cond}
	\max\left\{ 1-\sigma, \frac{1-\sigma}{1-1/\theta} \right\} < \frac{1}{\rho}.
\end{align}
For $\sigma\geq 1$ this condition holds for all $\rho$. On the other hand, if $0<\sigma<1$, then $0<1-\sigma<1$ and $1/(1-1/\theta)=\theta/(\theta-1)>1$ implies that the maximum in \link{eq:proof_cond} is attained by its second entry. Hence,  $w_{\sigma,\theta}'\in L_\rho(\R)$ is equivalent to \link{cond:w'_in_L_rho}.

It remains to show assertion \link{item:w_in_B}. We split its proof into several steps.

\emph{Step 1 (Preparations).} Note that for \link{item:w_in_B} it suffices to show that $w_{\sigma,\theta}\in B^s_{\rho,\infty}(\R) \setminus B^s_{\rho,q}(\R)$ for 
$$
s=\sigma+\frac{1-1/\theta}{\rho}\qquad \text{and all} \qquad q \in\R_+.
$$
Indeed, according to \autoref{lem:embedding}\link{item:smallers}, $w_{\sigma,\theta} \in B^{s}_{\rho,\infty}(\R)$ implies $w_{\sigma,\theta} \in B^{s'}_{\rho,q}(\R)$ for all $s'<s$ and $0<q\leq \infty$. Similarly, $w_{\sigma,\theta} \in B^{s''}_{\rho,q}(\R)$ for some $s''>s$ and some $0<q\leq \infty$ would yield $w_{\sigma,\theta} \in B^{s}_{\rho,1}(\R)$.

From \link{item:w_in_L_rho} we know that $\norm{w_{\sigma,\theta} \sep L_{\rho}(\R)}<\infty$, so that it remains to prove that 
$$
\abs{w_{\sigma,\theta}}_{B^s_{\rho,q}(\R)}<\infty
\quad \text{if and only if} \quad q=\infty.
$$
To this end, note that $0<\sigma$ and $1/\theta<1$ implies $s>0$, while 
$1/\rho < \theta\,(1+\sigma)$ holds if and only if $s>1/\rho-1$.
Moreover, the assumption $1/\rho < (1-\sigma)/(1-1/\theta)$ is equivalent to $s<1$. Hence,
$$
\max\left\{0, \frac{1}{\rho}-1\right\}<s<1,
$$
so that we can use first order differences. Therefore it is enough to show that
\begin{align}\label{eq:proof_Lrhow}
	\norm{\Delta_h w_{\sigma,\theta} \sep L_\rho(\R)} 
	\sim \abs{h}^{\sigma + (1-1/\theta)/\rho}
	\qquad \text{for all} \quad h\in\R\quad \text{with}\quad \abs{h} \leq (1/6)^\theta=:\overline{t},
\end{align}
because then
$$
	t^{-s} \sup_{\substack{h\in\R,\\\abs{h}\leq t}} \norm{\Delta_h w_{\sigma,\theta} \sep L_{\rho}(\R)} \sim 1, \qquad 0<t \leq \overline{t}.
$$
Of course, we may assume w.l.o.g.\ that $h>0$. 

	\emph{Step 2 (Case $\rho=\infty$).} 
	We prove ``$\lesssim$'' for $\rho=\infty$ in \link{eq:proof_Lrhow}.
For this purpose, it suffices to show that
\begin{align}\label{eq:distw}
\abs{w_{\sigma,\theta}(x)-w_{\sigma,\theta}(y)} \leq 2 \, \abs{x-y}^\sigma \qquad \text{for all} \qquad x,y\in\R \quad \text{with} \quad x<y.
\end{align}
So let $x,y\in\R$ with $h:=y-x>0$ be fixed. Note that it is enough to consider $a_{2,\theta} \leq x<a_{\infty,\theta}$, because $x<a_{2,\theta}$ implies 
$$
\abs{w_{\sigma,\theta}(x)-w_{\sigma,\theta}(y)} = \abs{w_{\sigma,\theta}(x+h)} \leq h^\sigma \quad\text{for all} \quad h>0,
$$
while $x \geq a_{\infty,\theta}$ would lead to $w_{\sigma,\theta}(x)=w_{\sigma,\theta}(y)=0$.
For $x\in [a_{2,\theta}, a_{\infty,\theta})$ the quantity 
$$
M:=M(x,\theta):=\max\{n\geq 2 \sep a_{n,\theta}\leq x\}
$$ 
is well-defined. 
In case $h=y-x \geq M^{-\theta}$, we have
$$
\abs{w_{\sigma,\theta}(x)-w_{\sigma,\theta}(y)} 
\leq \abs{w_{\sigma,\theta}(x)}+\abs{w_{\sigma,\theta}(y)}
\leq 2\, (M^{-\theta})^\sigma \leq 2\, h^{\sigma},
$$
as claimed. So let us turn to the case $0<h<M^{-\theta}$. If $y>a_{M+1,\theta}$, then again
$w_{\sigma,\theta}(x)=0$. Moreover, in this case $a_{M+1,\theta}<y=x+h<a_{M+1,\theta}+h$, i.e.,
$$
\abs{w_{\sigma,\theta}(x)-w_{\sigma,\theta}(y)} 
= \abs{w_{\sigma,\theta}(y)}
<\, h^{\sigma}.
$$
Similarly, if $y\in [a_{M+1,\theta}-M^{-\theta},a_{M+1,\theta}]$, then $w_{\sigma,\theta}(y)=0$ and
$$
\abs{w_{\sigma,\theta}(x)-w_{\sigma,\theta}(y)} 
= \abs{w_{\sigma,\theta}(x)}
\leq\, h^{\sigma}.
$$
Hence, we are left with the case $a_{M,\theta}\leq x < y < a_{M,\theta}+3\,M^{-\theta}$ and $0<h=y-x<M^{-\theta}$, but for this situation \link{eq:distw} is obvious.

For the corresponding lower bound let $0<h<\overline{t}$. Then
\begin{align*}
	\norm{\Delta_h w_{\sigma,\theta} \sep L_\infty(\R)}
	&\geq \esssup_{x\in(a_{2,\theta}-h, a_{2,\theta})} \abs{w_{\sigma,\theta}(x+h)-w_{\sigma,\theta}(x)} \\
	&= \esssup_{y\in(a_{2,\theta}, a_{2,\theta}+h)} \abs{w_{\sigma,\theta}(y)} \\
	&=w_{\sigma,\theta}(a_{2,\theta}+h)=h^\sigma.
\end{align*}

\emph{Step 3 (Case $\rho<\infty$).} 
In order to prove \link{eq:proof_Lrhow} for $\rho<\infty$, consider the disjoint union
$$
\R=\mathscr{L}_{\theta} \cup \left( \bigcup_{n=2}^\infty \left( \mathscr{L}_{n,\theta} \cup  \mathscr{T}_{n,\theta} \cup \mathscr{R}_{n,\theta}\right)  \right) \cup \mathscr{R}_{\theta},
$$
where we set $\mathscr{L}_{\theta}:=(-\infty, 4)$, $\mathscr{R}_{\theta}:=[4\,\zeta(\theta), \infty)$, as well as
$$
\mathscr{L}_{n,\theta}:=[a_{n,\theta}, a_{n,\theta}+ \,n^{-\theta}), \quad 
\mathscr{T}_{n,\theta}:=[a_{n,\theta}+ n^{-\theta}, a_{n,\theta}+ 3\,n^{-\theta}),
$$
and $\mathscr{R}_{n,\theta}:=[a_{n,\theta}+ 3\,n^{-\theta}, a_{n,\theta}+ 4\,n^{-\theta})$ for all $n\in\N$ with $n\geq 2$. Now let $0<h \leq \overline{t}=(1/6)^\theta$ be arbitrarily fixed. Then $N(h,\theta):= \ceil{ h^{-1/\theta}/3 } \in\N$ satisfies $N(h,\theta)\geq 2$ and 
$$
N(h,\theta) - 1 \geq \frac{1}{3}\, h^{-1/\theta}-1 = h^{-1/\theta} \left[\frac{1}{3}-h^{1/\theta} \right] \geq \frac{1}{6} \, h^{-1/\theta}
$$
due to the assumption $\theta>1$. 
Further, for all $n\in\N$ with $2 \leq n\leq N(h,\theta)$ it holds $n \leq h^{-1/\theta}/3+1$, i.e., 
$$
0<h\leq \frac{1}{3^\theta(n-1)^{\theta}} \leq (n+1)^{-\theta}.
$$
In this case
\begin{align}
	\norm{\Delta_h w_{\sigma,\theta} \sep L_\rho(\mathscr{R}_{n,\theta})}^\rho
	&= \int_{a_{n,\theta}+3\, n^{-\theta}}^{a_{n,\theta}+4\, n^{-\theta}} \abs{w_{\sigma,\theta}(\xi+h)-w_{\sigma,\theta}(\xi)}^\rho \d \xi \nonumber\\
	&= \int_{a_{n+1,\theta}}^{a_{n+1,\theta}+h} w_{\sigma,\theta}(y)^\rho \d y \nonumber\\
	&=\int_{0}^{h} y^{\sigma\rho} \d y \nonumber\\
	&= \frac{1}{\sigma\rho+1} \, h^{\sigma\rho+1} \label{eq:proof_NormR}
\end{align}
which yields the desired lower bound
\begin{align*}
	\norm{\Delta_h w_{\sigma,\theta} \sep L_\rho(\R)}^\rho
	&\geq \norm{\Delta_h w_{\sigma,\theta} \sep L_\rho\left(\bigcup_{n=2}^{N(h,\theta)} \mathscr{R}_{n,\theta} \right)}^\rho \\
	&= \sum_{n=2}^{N(h,\theta)} \norm{\Delta_h w_{\sigma,\theta} \sep L_\rho\left( \mathscr{R}_{n,\theta} \right)}^\rho \\
	&\geq (N(h,\theta)-1)\,\frac{1}{\sigma\rho+1} \, h^{\sigma\rho+1} \\
	&\gtrsim h^{\sigma\rho+1-1/\theta}.
\end{align*}

Let us show the corresponding upper bound. Using Step 2 and log-convexity of $L_\rho$-norms, we see that the bound for $L_\rho$ implies the respective bound for all $L_p$ with $0<1/p<1/\rho$:
\begin{align*}
	\norm{\Delta_h w_{\sigma,\theta} \sep L_p(\R)}^p
	&\leq \left( \norm{\Delta_h w_{\sigma,\theta} \sep L_\rho(\R)}^{\rho/p} \, \norm{\Delta_h w_{\sigma,\theta} \sep L_\infty(\R)}^{1-\rho/p} \right)^p \\
	&= \norm{\Delta_h w_{\sigma,\theta} \sep L_\rho(\R)}^\rho \, \norm{\Delta_h w_{\sigma,\theta} \sep L_\infty(\R)}^{p-\rho} \\
	&\lesssim h^{\rho\sigma + 1-1/\theta} \, \left( h^{\sigma} \right)^{p-\rho}\\
	&=h^{p\sigma + 1-1/\theta}.
\end{align*}
Therefore, since $(1-\sigma)/(1-1/\theta)>1$ if and only if $\sigma < 1/\theta$, we may assume w.l.o.g. 
$$
	1 \leq \frac{1}{\rho} < \frac{1-\sigma}{1-1/\theta}.
$$
So, let $0<\rho \leq 1$ and consider $2\leq n \leq N(h,\theta)$. Then H\"older's inequality (with $1/r:=1-\rho$ and $1/r'=\rho$) and the monotonicity of $w_{\sigma,\theta}$ imply
\begin{align*}
	\norm{\Delta_h w_{\sigma,\theta} \sep L_\rho(\mathscr{L}_{n,\theta})}
	&\leq \left( n^{-\theta} \right)^{1/\rho-1}\, \int_{a_{n,\theta}}^{a_{n,\theta}+n^{-\theta}} \big( w_{\sigma,\theta}(\xi+h)-w_{\sigma,\theta}(\xi) \big) \d \xi \\
	&=  n^{-\theta(1/\rho-1)} \, \left( \int_{a_{n,\theta}+h}^{a_{n,\theta}+n^{-\theta}+h} w_{\sigma,\theta}(y) \d y - \int_{a_{n,\theta}}^{a_{n,\theta}+n^{-\theta}} w_{\sigma,\theta}(\xi) \d \xi \right) \\
	&= n^{-\theta(1/\rho-1)} \, \left( h\, n^{-\theta\sigma} - \int_{a_{n,\theta}}^{a_{n,\theta}+h} w_{\sigma,\theta}(\xi) \d \xi \right)\\
	&\leq h\, n^{-\theta(\sigma+1/\rho-1)},
\end{align*}
i.e., $\norm{\Delta_h w_{\sigma,\theta} \sep L_\rho(\mathscr{L}_{n,\theta})}^\rho \leq h^\rho \, n^{-\theta(\sigma\rho+1-\rho)}$, as well as
\begin{align*}
	\norm{\Delta_h w_{\sigma,\theta} \sep L_\rho(\mathscr{T}_{n,\theta})}
	&\leq \left( 2n^{-\theta} \right)^{1/\rho-1}\, \int_{a_{n,\theta}+ n^{-\theta}}^{a_{n,\theta}+3\,n^{-\theta}} \big( w_{\sigma,\theta}(\xi+h)-w_{\sigma,\theta}(\xi) \big) \d \xi \\ 
	&\lesssim n^{-\theta(1/\rho-1)} \left( \int_{a_{n,\theta}+ n^{-\theta}}^{a_{n,\theta}+ 3\,n^{-\theta}} w_{\sigma,\theta}(\xi) \d \xi - \int_{a_{n,\theta}+ n^{-\theta}+h}^{a_{n,\theta}+ 3\,n^{-\theta}+h} w_{\sigma,\theta}(\xi) \d \xi \right) \\ 
	&= n^{-\theta(1/\rho-1)} \left( \int_{a_{n,\theta}+ n^{-\theta}}^{a_{n,\theta}+ n^{-\theta}+h} \underbrace{ w_{\sigma,\theta}(\xi)}_{=n^{-\theta\sigma}} \d \xi - \int_{a_{n,\theta}+ 3\,n^{-\theta}}^{a_{n,\theta}+ 3\,n^{-\theta}+h} \underbrace{w_{\sigma,\theta}(\xi)}_{=0} \d \xi \right)  \\
	&= h\, n^{-\theta(\sigma+1/\rho-1)}
\end{align*}
so that $\norm{\Delta_h w_{\sigma,\theta} \sep L_\rho(\mathscr{T}_{n,\theta})}^\rho \lesssim h^\rho \, n^{-\theta(\sigma\rho+1-\rho)}$. 
Further we have $\sigma\rho+1-\rho >0$ because $1/\rho \geq 1 \geq 1-\sigma$. So, \link{eq:proof_NormR} and $h\leq n^{-\theta}$ yield that also
\begin{align*}
	\norm{\Delta_h w_{\sigma,\theta} \sep L_\rho(\mathscr{R}_{n,\theta})}^\rho \sim h^{\sigma\rho+1} 
	= h^{\rho}\, h^{\sigma\rho+1-\rho} 
	\lesssim h^\rho \, n^{-\theta(\sigma\rho+1-\rho)}.
\end{align*}
Combining the latter estimates shows that
\begin{align}\label{eq:proof_boundsum}
	\norm{\Delta_h w_{\sigma,\theta} \sep L_\rho \left( \bigcup_{n=2}^{N(h,\theta)} \left( \mathscr{L}_{n,\theta} \cup  \mathscr{T}_{n,\theta} \cup \mathscr{R}_{n,\theta}\right)  \right) }^\rho
	&\lesssim h^\rho\, \sum_{n=2}^{N(h,\theta)} n^{-\theta(\sigma\rho+1-\rho)}
	\quad \text{for all} \quad 0<\rho\leq 1. 
\end{align}
Now additionally assume 
$1/\rho < (1-\sigma)/(1-1/\theta)$. Then there holds $0<\theta(\sigma\rho+1-\rho)<1$ and hence
\begin{align*}
	\sum_{n=2}^{N(h,\theta)} n^{-\theta(\sigma\rho+1-\rho)} 
	&\leq \int_1^{N(h,\theta)} x^{-\theta(\sigma\rho+1-\rho)}\d x 
	=\frac{1}{1-\theta(\sigma\rho+1-\rho)} \left( N(h,\theta)^{1-\theta(\sigma\rho+1-\rho)} -1\right),
\end{align*}
where
\begin{align*}
	N(h,\theta)^{1-\theta(\sigma\rho+1-\rho)} -1 
	&\leq \left( \frac{1}{3}\,h^{-1/\theta}+1 \right)^{1-\theta(\sigma\rho+1-\rho)} -1 \\
	&\leq \left( \frac{1}{3}\,h^{-1/\theta} \right)^{1-\theta(\sigma\rho+1-\rho)} 
	\\
	&
	\sim h^{-\rho+\sigma\rho+1-1/\theta}.
\end{align*}
Therefore, we arrive at
\begin{align*}
	\norm{\Delta_h w_{\sigma,\theta} \sep L_\rho \left( \bigcup_{n=2}^{N(h,\theta)} \left( \mathscr{L}_{n,\theta} \cup  \mathscr{T}_{n,\theta} \cup \mathscr{R}_{n,\theta}\right)  \right) }^\rho
	&\lesssim h^{\sigma\rho+1-1/\theta}. 
\end{align*}
Moreover, \link{eq:distw} and $\theta>1$ yield that also
\begin{align*}
	&\norm{\Delta_h w_{\sigma,\theta} \sep L_\rho \left( \bigcup_{n=N(h,\theta)+1}^\infty \left( \mathscr{L}_{n,\theta} \cup  \mathscr{T}_{n,\theta} \cup \mathscr{R}_{n,\theta}\right)  \right) }^\rho 
	\\
	&\qquad
	=\sum_{n=N(h,\theta)+1}^\infty \norm{\Delta_h w_{\sigma,\theta} \sep L_\rho([a_{n,\theta},a_{n,\theta}+4\,n^{-\theta}])}^\rho 
\end{align*}
is bounded by
\begin{align*}	
	\sum_{n=N(h,\theta)+1}^\infty 4\,n^{-\theta} \, (2 \,h^\sigma)^\rho 
	\lesssim h^{\sigma\rho}\, \int_{N(h,\theta)}^\infty x^{-\theta} \d x
	= h^{\sigma\rho}\, \frac{1}{\theta-1} \,  N(h,\theta)^{1-\theta} 
	\lesssim h^{\sigma\rho+1-1/\theta}. 
\end{align*}
Finally, we clearly have $w_{\sigma,\theta}(x)=0$ on $\mathscr{L}_{\theta}\cup \mathscr{R}_{\theta}$ and hence $\norm{\Delta_h w_{\sigma,\theta} \sep L_\rho(\mathscr{R}_{\theta})}=0$, as well as
\begin{align*}
	\norm{\Delta_h w_{\sigma,\theta} \sep L_\rho(\mathscr{L}_{\theta})}^\rho 
	&= \int_{a_{2,\theta}}^{a_{2,\theta}+h} w_{\sigma,\theta}(y)^\rho \d y 
	\leq \int_{0}^{h} x^{\sigma\rho} \d x
	= \frac{1}{\sigma\rho+1} \, h^{\sigma\rho+1} \lesssim
h^{\sigma\rho+1-1/\theta}.
\end{align*}
Altogether, this shows \link{eq:proof_Lrhow} and thus the proof is complete.
\end{proof}

\begin{remark}\label{rem:parameter}
We stress that some parameter restrictions in \autoref{lem:Regw}\link{item:w_in_B} are stronger than required. If $\rho=\infty$, our proof actually works for all $0<\sigma<1<\theta<\infty$. Moreover, the upper bound on $1/\rho$ in \link{cond:rho} seems to be an artifact of our proof technique. At least for the ``only if'' part it can be dropped, as can be seen easily using complex interpolation. 
\hfill$\square$
\end{remark}

In order to proceed, again let $\sigma\in\R_+$ and $1<\theta<\infty$. Then, based on $w_{\sigma,\theta}$ as defined above, let us set
\begin{align}\label{eq:def_u}
	\begin{aligned}
		v_{\sigma,\theta}\colon \R_+\to\R,
		\qquad 
		&r\mapsto v_{\sigma,\theta}(r) := w_{\sigma,\theta} \big( 16\,\zeta(\theta)\,r-4\,\zeta(\theta) \big) - w_{\sigma,\theta}\big(16\,\zeta(\theta)\,r-8\,\zeta(\theta)\big), \\
		u_{\sigma,\theta}\colon \R_+\to\R, 
		\qquad 
		&r\mapsto  u_{\sigma,\theta}(r):=\int_{0}^{r}  v_{\sigma,\theta}(\xi)\d \xi. 
	\end{aligned}
\end{align}

\begin{lemma}[Properties of $v_{\sigma,\theta}$ and $u_{\sigma,\theta}$]\label{lem:prop_uv}
	Let $\sigma\in \R_+$ and $1<\theta<\infty$. Then
\begin{enumerate}[label=(\roman*),ref=\roman*]
	\item\label{item:supp_uv} the supports of $ u_{\sigma,\theta}$ and $ v_{\sigma,\theta}$ are contained in $\tilde{\mathscr{S}}_\theta:= [ 1/4, 3/4]$.
	\item \label{item:vgammasigma} $ v_{\gamma\sigma,\theta}=\abs{ v_{\sigma,\theta}}^{\gamma-1}  v_{\sigma,\theta}$ for all $\gamma \in\R_+$.
	\item\label{item:u'} $u_{\sigma,\theta}\in C^1(\R_+)$ with $u'_{\sigma,\theta}=v_{\sigma,\theta}$.
	\item\label{item:Regv} for all $0<\rho,q\leq \infty$ and $s<1$ we have
	\begin{align*}
		u_{\sigma,\theta}\in B_{\rho,q}^{1+s}(\R_+) \qquad\text{if and only if}\qquad w_{\sigma,\theta}\in B_{\rho,q}^s(\R).
	\end{align*} 	
	\item\label{item:Regv2} for all $1<\rho<\infty$ we have
		\begin{align*}
			u_{\sigma,\theta}\in W^{2}_{\rho}(\R_+) \qquad\text{if and only if}\qquad w_{\sigma,\theta}\in W^1_{\rho}(\R).
		\end{align*} 	
\end{enumerate}
\end{lemma}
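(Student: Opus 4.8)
The plan is to settle \link{item:supp_uv}--\link{item:u'} by direct computation and then to reduce \link{item:Regv} and \link{item:Regv2} to the already-established regularity of $w_{\sigma,\theta}$ (\autoref{lem:Regw}) via the lift property \autoref{lem:embedding}\link{item:lift}. Throughout I would abbreviate $\phi_1(r):=16\,\zeta(\theta)\,r-4\,\zeta(\theta)$ and $\phi_2(r):=16\,\zeta(\theta)\,r-8\,\zeta(\theta)$, so that $v_{\sigma,\theta}=g_1-g_2$ with $g_i:=w_{\sigma,\theta}\circ\phi_i$, $i=1,2$.

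For \link{item:supp_uv}: since $\supp(w_{\sigma,\theta})\subseteq[4,4\,\zeta(\theta)]$ by \autoref{lem:prop_w}\link{item:supp_w}, one has $\supp(g_i)\subseteq\phi_i^{-1}([4,4\,\zeta(\theta)])$, which are the closed intervals $[\tfrac14+\tfrac1{4\zeta(\theta)},\tfrac12]$ and $[\tfrac12+\tfrac1{4\zeta(\theta)},\tfrac34]$; these are disjoint with $\dist{\supp g_1}{\supp g_2}\geq\tfrac1{4\zeta(\theta)}>0$ and both contained in $\tilde{\mathscr{S}}_\theta$, hence $\supp(v_{\sigma,\theta})\subseteq\tilde{\mathscr{S}}_\theta$. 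A change of variables gives $\int_{\R}g_1=\int_{\R}g_2=\tfrac1{16\zeta(\theta)}\int_{\R}w_{\sigma,\theta}$, so $\int_0^\infty v_{\sigma,\theta}=0$ and therefore $u_{\sigma,\theta}(r)=0$ for $r\leq\tfrac14$ as well as for $r\geq\tfrac34$. For \link{item:vgammasigma} I would use that $g_1,g_2\geq 0$ have disjoint supports, so at each point at most one of them is nonzero; since \autoref{lem:prop_w}\link{item:wgammasigma} yields $w_{\gamma\sigma,\theta}=(w_{\sigma,\theta})^\gamma$, hence $w_{\gamma\sigma,\theta}\circ\phi_i=g_i^\gamma$, this gives $v_{\gamma\sigma,\theta}=g_1^\gamma-g_2^\gamma=\abs{v_{\sigma,\theta}}^{\gamma-1}v_{\sigma,\theta}$. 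Part \link{item:u'} is the fundamental theorem of calculus, $v_{\sigma,\theta}$ being continuous by \autoref{lem:prop_w}\link{item:supp_w}.

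For \link{item:Regv} I would invoke \autoref{lem:embedding}\link{item:lift} on the interval $\R_+$ (where $d=1$, $\nabla g=g'$): $u_{\sigma,\theta}\in B^{1+s}_{\rho,q}(\R_+)$ if and only if both $u_{\sigma,\theta}\in B^{s}_{\rho,q}(\R_+)$ and $v_{\sigma,\theta}=u'_{\sigma,\theta}\in B^{s}_{\rho,q}(\R_+)$. The first condition holds automatically for all $s<1$ and all $\rho,q$: by \autoref{lem:prop_w}\link{item:bound_w} the function $v_{\sigma,\theta}$ is bounded, so $u_{\sigma,\theta}$ is Lipschitz with compact support, and therefore $u_{\sigma,\theta}\in B^1_{\infty,\infty}(\R_+)\hookrightarrow B^1_{\rho,\infty}(\R_+)\hookrightarrow B^{s}_{\rho,q}(\R_+)$ by \autoref{lem:embedding}\link{item:smallerrho} and \link{item:smallers}. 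So \link{item:Regv} reduces to the equivalence
$$v_{\sigma,\theta}\in B^{s}_{\rho,q}(\R_+)\quad\Longleftrightarrow\quad w_{\sigma,\theta}\in B^{s}_{\rho,q}(\R).$$
Similarly, unwinding $W^2_\rho(\R_+)$ by means of \link{item:u'} and using once more that $u_{\sigma,\theta},v_{\sigma,\theta}\in L_\rho(\R_+)$ (boundedness and compact support), part \link{item:Regv2} reduces to the analogous equivalence with $B^{s}_{\rho,q}$ replaced by $W^1_\rho$; here the a.e.\ derivative of $w_{\sigma,\theta}$ exists by \autoref{lem:prop_w}\link{item:countableT}.

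These two equivalences are the heart of the proof, and I expect them to be the only genuinely technical step. Both rely on the decomposition $v_{\sigma,\theta}=g_1-g_2$ into two affine copies of $w_{\sigma,\theta}$ supported in disjoint intervals. For ``$\Leftarrow$'' I would combine the quasi-triangle inequality with the dilation/translation behaviour $\norm{g_i\sep B^{s}_{\rho,q}(\R)}\sim\norm{w_{\sigma,\theta}\sep B^{s}_{\rho,q}(\R)}$ (the implied constants may depend on the fixed $\theta$) and the fact that restriction to $\R_+$ does not increase the quasi-norm; in the $W^1_\rho$-case this amounts to an elementary change of variables in an integral. For ``$\Rightarrow$'' I would extend $v_{\sigma,\theta}$ by $g_1-g_2$ to all of $\R$, choose $\chi\in C_0^\infty(\R)$ with $\chi\equiv 1$ on $\supp(g_1)$ and $\supp(\chi)\cap\supp(g_2)=\leer$ (possible since the two supports are separated), so that $\chi\,v_{\sigma,\theta}=g_1$, use that multiplication by a fixed $C_0^\infty$-function is bounded on $B^{s}_{\rho,q}$ --- respectively the analogous elementary fact that a continuous function which is $W^1_\rho$ on an open interval and vanishes outside it extends by zero to $W^1_\rho(\R)$ --- and transport the conclusion back along $w_{\sigma,\theta}=g_1\circ\phi_1^{-1}$; symmetrically for $g_2$. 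The points to watch here are the $C_0^\infty$-multiplier property of the quasi-Banach Besov spaces over the full parameter range and the behaviour near the boundary point $0$ of $\R_+$, but both are standard, so I do not anticipate a real obstruction.
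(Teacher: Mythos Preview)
Your proposal is correct and follows essentially the same route as the paper: parts \link{item:supp_uv}--\link{item:u'} by direct computation from the disjoint-support decomposition $v_{\sigma,\theta}=g_1-g_2$, and parts \link{item:Regv}--\link{item:Regv2} by combining the lift characterization (\autoref{lem:embedding}\link{item:lift}) with the equivalence $v_{\sigma,\theta}\in B^s_{\rho,q}(\R_+)\Leftrightarrow w_{\sigma,\theta}\in B^s_{\rho,q}(\R)$, the latter via affine invariance and a $C_0^\infty$ cutoff. Two small remarks: in your ``$\Rightarrow$'' step you should start from an \emph{arbitrary} extension $g\in B^s_{\rho,q}(\R)$ of $v_{\sigma,\theta}$ (guaranteed by the definition of $B^s_{\rho,q}(\R_+)$) rather than asserting directly that the zero extension $g_1-g_2$ lies in $B^s_{\rho,q}(\R)$---your cutoff $\chi$ then yields $\chi g=g_1$ just as well, and this is exactly what the paper does; and your Lipschitz argument for $u_{\sigma,\theta}\in B^s_{\rho,q}(\R_+)$ is a slightly cleaner variant of the paper's $C^1\Rightarrow B^{1-\epsilon}_{\infty,q}$ observation.
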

\begin{proof}
We use $\supp(w_{\sigma,\theta}) \subseteq \mathscr{S}_\theta=[a_{2,\theta},a_{\infty,\theta}]=[4,4\,\zeta(\theta)]$, as shown in \autoref{lem:prop_w}\link{item:supp_w}, to deduce the representation
\begin{align}\label{eq:representation_v}
	v_{\sigma,\theta}(r) 
	= \begin{cases}
		w_{\sigma,\theta}(t) &\quad\text{if}\quad\displaystyle r=\frac{t+4\,\zeta(\theta)}{16\,\zeta(\theta)} \in \left[\frac{1}{4}+\frac{1}{4\,\zeta(\theta)}, \frac{1}{2} \right],\\
		-w_{\sigma,\theta}(t') &\quad\text{if}\quad\displaystyle r=\frac{t'+8\,\zeta(\theta)}{16\,\zeta(\theta)} \in \left[\frac{1}{2}+\frac{1}{4\,\zeta(\theta)}, \frac{3}{4} \right],\\
		0&\quad\text{else}.
	\end{cases}
\end{align}
This proves \link{item:supp_uv} for $ v_{\sigma,\theta}$.
Moreover, for $0< r < 1/4$ we have
$
 u_{\sigma,\theta}(r)=\int_0^r 0 \d \xi =0,
$
while for $r>3/4$ we may write
\begin{align*}
	u_{\sigma,\theta}(r)
	&=\int_0^r  v_{\sigma,\theta}(\xi)\d \xi 
	=0+\int_{1/4}^{1/2} v_{\sigma,\theta}(\xi)\d \xi 
		-\int_{1/2}^{3/4} -v_{\sigma,\theta}(\xi) \d \xi
		+0
		=0 
\end{align*}
which shows \link{item:supp_uv} for $u_{\sigma,\theta}$. 
Further, \link{item:vgammasigma} directly follows from \link{eq:representation_v} and \autoref{lem:prop_w}\link{item:wgammasigma}.

We are left with proving the regularity assertions \link{item:u'}--\link{item:Regv2}. The fact that $u_{\sigma,\theta}\in C^1(\R_+)$ with $u_{\sigma,\theta}'=v_{\sigma,\theta}$ is a consequence of the fundamental theorem of calculus and the continuity of $v_{\sigma,\theta}$, cf.\ \link{eq:def_u} and \autoref{lem:prop_w}\link{item:supp_w} again. This shows \link{item:u'}.

If we assume that $w_{\sigma,\theta}\in B_{\rho,q}^s(\R)$, then also
$$
	\widetilde{v_{\sigma,\theta}} 
	:= w_{\sigma,\theta}(16\,\zeta(\theta)\,\cdot-4\,\zeta(\theta)) - w_{\sigma,\theta}(16\,\zeta(\theta)\,\cdot-8\,\zeta(\theta)) \in B_{\rho,q}^s(\R)
$$
because $B_{\rho,q}^s(\R)$ is invariant under diffeomorphic coordinate transformations; see, e.g., Triebel \cite[Sect.\ 2.10.2]{T83}. 
Since $v_{\sigma,\theta} = \widetilde{v_{\sigma,\theta}} \big|_{\R_+}$ this yields $v_{\sigma,\theta} \in B_{\rho,q}^s(\R_+)$. On the other hand, $v_{\sigma,\theta} \in B_{\rho,q}^s(\R_+)$ implies that there exists $g\in B_{\rho,q}^s(\R)$ such that $v_{\sigma,\theta} = g \big|_{\R_+}$. Now let $\chi\in C^\infty(\R)$ with
$$
	\chi(x) 
	= \begin{cases}
		1 & \text{for}\quad  1/4 \leq x \leq 1/2+1/(6\,\zeta(\theta)), \\
		0 & \text{for}\quad x \leq 1/5, \quad\text{or}\quad 1/2+1/(5\,\zeta(\theta))\leq x.
	\end{cases}
$$
Then, according to a multiplication theorem by Triebel \cite[Sect.\ 4.2.2]{T92}, we conclude that $w_{\sigma,\theta}=\chi\, g \in B_{\rho,q}^s(\R)$. 
Due to \link{item:u'}, this shows that for all $0<\rho,q\leq \infty$ and $s\in\R$
	\begin{align}\label{eq:Regv}
u_{\sigma,\theta}'\in B_{\rho,q}^s(\R_+) \qquad\text{if and only if}\qquad w_{\sigma,\theta}\in B_{\rho,q}^s(\R).
\end{align} 
Moreover, we may extend $u_{\sigma,\theta}\in C^1(\R_+)$ by zero in order to obtain $\tilde{u_{\sigma,\theta}}\in C^1(\R)$.
Using the characterization of Besov spaces in terms of first order differences, we see that this gives $\tilde{u_{\sigma,\theta}} \in B^{1-\epsilon}_{\infty,q}(\R)$ for all $0<\epsilon<1$. Choosing $\epsilon$ small enough such that $s<1-\epsilon$ then shows $\tilde{u_{\sigma,\theta}} \in B^{s}_{\infty,q}(\R) \hookrightarrow B^{s}_{\rho,q}(\R)$, i.e., $u_{\sigma,\theta} \in B^{s}_{\rho,q}(\R_+)$, where we used \autoref{lem:embedding} and the compact support of $\tilde{u_{\sigma,\theta}}$. Therefore, \link{item:Regv} follows from \autoref{lem:embedding}\link{item:lift}.

Since Sobolev spaces $W^k_\rho$ can be identified as special Triebel-Lizorkin spaces $F^k_{\rho,2}$, we can argue similarly for this case. Instead of \link{eq:Regv} we now have that for every $k\in\N_0$ (particularly for $k=1$) and $1<\rho<\infty$ there holds $u_{\sigma,\theta}'\in W^k_{\rho}(\R_+)$ if and only if $w_{\sigma,\theta}\in W^k_{\rho}(\R)$. 
Further from \link{item:supp_uv} and \link{item:u'} we clearly have $u_{\sigma,\theta}\in W^1_{\rho}(\R_+)$. Together this shows \link{item:Regv2} and hence the proof is complete.
\end{proof}

Next, for $d\in \N$ let $\Omega \subseteq \R^d$ be either $\R^d$ itself, a bounded Lipschitz domain, or an interval (if $d=1$), and assume that $\Omega$ contains the Euclidean unit ball $B_1(0):=\{x\in\R^d \sep \abs{x}_2<1\}$. Given $\sigma \in \R_+$, as well as $1<p,\theta<\infty$, we then let 
\begin{align}\label{eq:def_uf}
	u_{\sigma,\theta,d}(\psi):=\int_\Omega u_{\sigma,\theta}(\abs{x}_2) \, \psi(x)\d x
	\quad\text{and}\quad
	 f_{\sigma,\theta,d}^{[p]}(\psi) := \int_\Omega v_{(p-1)\sigma,\theta}(\abs{x}_2) \, \distr{\frac{x}{\abs{x}_2}}{\nabla \psi(x)}_{\R^d}\d x
\end{align}
for all test functions $\psi\in\D(\Omega)$. It is easily seen that we actually deal with distributions $u_{\sigma,\theta,d}, f_{\sigma,\theta,d}^{[p]}\in \D'(\Omega)$ since \autoref{lem:prop_uv}\link{item:u'} implies $u_{\sigma,\theta}, v_{(p-1)\sigma,\theta}\in L_\infty(\R_+)$. These distributions are closely related:

\begin{lemma}\label{lem:p-poisson}
Let $d\in\N$ and $1<p,\theta<\infty$, as well as $\sigma\in\R_+$. Further let $\Omega \subseteq \R^d$ be either $\R^d$ itself, a bounded Lipschitz domain, or an interval (if $d=1$), and assume $B_1(0)\subseteq\Omega$. Then 
	\begin{enumerate}[label=(\roman*),ref=\roman*]
		\item\label{item:supp_U} $\supp(u_{\sigma,\theta,d}), \supp(f^{[p]}_{\sigma,\theta,d})\subseteq B_{4/5}(0)$,
		\item\label{item:Div} for all $x\in\Omega$ there holds
		\begin{align}\label{eq:Anablau}
			 A(\nabla u_{\sigma,\theta,d})(x) 
			 = v_{(p-1)\sigma,\theta}(\abs{x}_2) \, \frac{x}{\abs{x}_2}
			 = \nabla u_{(p-1)\sigma,\theta,d}(x),
		\end{align}
	\item\label{item:ppoisson} $f_{\sigma,\theta,d}^{[p]} \in W_{p'}^{-1}(\Omega)$ and $u_{\sigma,\theta,d}\in W^1_{p,0}(\Omega)$ constitutes a weak solution $u$ to
	\begin{align*}
	-\Delta_p u = f_{\sigma,\theta,d}^{[p]} \qquad \text{and} \qquad 
u|_{\partial\Omega}=0.
\end{align*}
	\end{enumerate}
\end{lemma}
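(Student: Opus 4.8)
The plan is to establish the three assertions in turn; all of them reduce to short, essentially pointwise computations, the one mild subtlety being the behaviour of the integrands at the origin, which is harmless since the radial profile $u_{\sigma,\theta}$ vanishes near $0$.

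First I would prove \link{item:supp_U}. By \autoref{lem:prop_uv}\link{item:supp_uv} the functions $u_{\sigma,\theta}$ and $v_{(p-1)\sigma,\theta}$ are supported in $[1/4,3/4]$, so the integrand defining $u_{\sigma,\theta,d}$ in \link{eq:def_uf} vanishes off the annulus $\{1/4\leq\abs{x}_2\leq 3/4\}$ and thus $\supp(u_{\sigma,\theta,d})\subseteq\overline{B_{3/4}(0)}\subset B_{4/5}(0)$. The same reasoning applies to $f_{\sigma,\theta,d}^{[p]}$: if $\psi\in\D(\Omega)$ has support disjoint from that annulus, then $v_{(p-1)\sigma,\theta}(\abs{x}_2)$ vanishes on $\supp\psi$ — which also neutralizes the singularity of $x\mapsto x/\abs{x}_2$ at $0$ — so $f_{\sigma,\theta,d}^{[p]}(\psi)=0$, giving $\supp(f_{\sigma,\theta,d}^{[p]})\subseteq\overline{B_{3/4}(0)}\subset B_{4/5}(0)$.

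Next I would turn to \link{item:Div}. Since $u_{\sigma,\theta}\in C^1(\R_+)$ with $u_{\sigma,\theta}'=v_{\sigma,\theta}$ by \autoref{lem:prop_uv}\link{item:u'} and $u_{\sigma,\theta}$ vanishes near $0$, the map $x\mapsto u_{\sigma,\theta}(\abs{x}_2)$ is genuinely $C^1$ on $\Omega$, and the chain rule yields $\nabla u_{\sigma,\theta,d}(x)=v_{\sigma,\theta}(\abs{x}_2)\,x/\abs{x}_2$ (interpreted as $0$ near the origin, consistently with $v_{\sigma,\theta}(0)=0$). Because $\abs{x/\abs{x}_2}_2=1$ this gives $\abs{\nabla u_{\sigma,\theta,d}(x)}_2=\abs{v_{\sigma,\theta}(\abs{x}_2)}$, hence
$$
	A(\nabla u_{\sigma,\theta,d})(x)=\abs{v_{\sigma,\theta}(\abs{x}_2)}^{p-2}v_{\sigma,\theta}(\abs{x}_2)\,\frac{x}{\abs{x}_2}=v_{(p-1)\sigma,\theta}(\abs{x}_2)\,\frac{x}{\abs{x}_2},
$$
where the last equality is \autoref{lem:prop_uv}\link{item:vgammasigma} with $\gamma:=p-1$; the identification with $\nabla u_{(p-1)\sigma,\theta,d}(x)$ is the same chain-rule computation with $\sigma$ replaced by $(p-1)\sigma$.

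Finally, for \link{item:ppoisson}, the function $x\mapsto u_{\sigma,\theta}(\abs{x}_2)$ is $C^1$ with compact support inside the open set $\Omega$, so mollification places it in $W^1_{p,0}(\Omega)$. Hölder's inequality applied to \link{eq:def_uf}, together with $\abs{x/\abs{x}_2}_2=1$ and the boundedness and compact support of $v_{(p-1)\sigma,\theta}(\abs{\cdot}_2)$, gives
$$
	\abs{f_{\sigma,\theta,d}^{[p]}(\psi)}\leq\norm{v_{(p-1)\sigma,\theta}(\abs{\cdot}_2)\sep L_{p'}(\Omega)}\,\norm{\nabla\psi\sep L_p(\Omega)}\lesssim\norm{\psi\sep W^1_{p,0}(\Omega)},
$$
so $f_{\sigma,\theta,d}^{[p]}\in W^{-1}_{p'}(\Omega)=(W^1_{p,0}(\Omega))'$. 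The weak-solution property is then immediate from \link{item:Div}: for every $\psi\in\D(\Omega)$,
$$
	\int_\Omega\distr{A(\nabla u_{\sigma,\theta,d})(x)}{\nabla\psi(x)}_{\R^d}\d x=\int_\Omega v_{(p-1)\sigma,\theta}(\abs{x}_2)\distr{\frac{x}{\abs{x}_2}}{\nabla\psi(x)}_{\R^d}\d x=f_{\sigma,\theta,d}^{[p]}(\psi),
$$
which is exactly the variational formulation of $-\Delta_p u=f_{\sigma,\theta,d}^{[p]}$, the homogeneous Dirichlet condition being built into $u_{\sigma,\theta,d}\in W^1_{p,0}(\Omega)$. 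I do not anticipate a real obstacle here; the only points demanding a little care are the pointwise gradient identity at the origin (handled by the vanishing of $u_{\sigma,\theta}$ there) and the verification that $u_{\sigma,\theta,d}$ lies in $W^1_{p,0}(\Omega)$ rather than merely $W^1_p(\Omega)$ (handled by its compact support in $\Omega$).
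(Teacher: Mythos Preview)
Your proposal is correct and follows essentially the same route as the paper: both arguments deduce the support statement from \autoref{lem:prop_uv}\link{item:supp_uv}, compute $\nabla u_{\sigma,\theta,d}$ via the chain rule (treating the origin separately using the vanishing of the radial profile there), invoke \autoref{lem:prop_uv}\link{item:vgammasigma} with $\gamma=p-1$ for the vector-field identity, and then read off the weak formulation and the $W^{-1}_{p'}$ bound via H\"older. The only cosmetic differences are that the paper bounds $f$ using the $L_\infty$ norm of $v_{(p-1)\sigma,\theta}$ rather than its $L_{p'}$ norm, and verifies $u_{\sigma,\theta,d}\in W^1_{p,0}(\Omega)$ by a direct norm computation plus compact support rather than by explicitly mentioning mollification.
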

\begin{proof}
	In view of \autoref{lem:prop_uv} assertion \link{item:supp_U} is obvious. Moreover, it is clear that $u_{\sigma,\theta,d}\in \D'(\Omega)$ is regular and can be identified with the function $\big( u_{\sigma,\theta}\circ r_d \big)\big|_\Omega\in C^1(\Omega)$, where we set
\begin{align}\label{eq:def_rd}
	r_d \colon \R^d\to\R, \qquad x\mapsto r_d(x):=\abs{x}_2.
\end{align} 
With this interpretation we have 
	$$
	\frac{\partial u_{\sigma,\theta,d}}{\partial x_j}(x) = 0 = v_{\sigma,\theta}(\abs{x}_2) \, \frac{x_j}{\abs{x}_2}, \qquad j=1,\ldots,d,
	$$
	for all $x\in B_{1/4}(0)$, while on $\Omega\setminus B_{1/8}(0)$ the chain rule and \autoref{lem:prop_uv}\link{item:u'} give
	$$
		\frac{\partial u_{\sigma,\theta,d}}{\partial x_j}
		= \frac{\partial (u_{\sigma,\theta}\circ r_d)}{\partial x_j}
		= v_{\sigma,\theta}(r_d(\cdot))\,\frac{\partial r_d}{\partial x_j},
	$$
	where 
	$$
		\frac{\partial r_d}{\partial x_j}(x)
		= \frac{\partial}{\partial x_j} \left[ \left( \sum_{k=1}^d x_k^2 \right)^{1/2}\right](x)
		=\frac{1}{2} \left( \sum_{k=1}^d x_k^2 \right)^{-1/2} \, 2x_j = \frac{x_j}{\abs{x}_2}.
	$$
	Together this shows 
	\begin{align}\label{eq:proof_gradu}
		\frac{\partial u_{\sigma,\theta,d}}{\partial x_j}(x)= v_{\sigma,\theta}(\abs{x}_2) \, \frac{x_j}{\abs{x}_2}
		\qquad \text{for all} \qquad j=1,\ldots,d \quad \text{and} \quad x\in\Omega
	\end{align}	
and hence
\begin{align*}
	\norm{u_{\sigma,\theta,d} \sep W^1_p(\Omega)} 
	= \norm{u_{\sigma,\theta,d} \sep L_p(\Omega)} + \sum_{j=1}^d	\norm{\frac{\partial u_{\sigma,\theta,d}}{\partial x_j} \sep L_p(\Omega)} <\infty
\end{align*}
since $u_{\sigma,\theta}, v_{\sigma,\theta}\in L_\infty(\R_+)$ with compact support and $\abs{x_j / \abs{x}_2}\leq 1$. So, we can conclude $u_{\sigma,\theta,d} \in W^1_{p,0}(\Omega)$.
Further, as a direct consequence of \link{eq:proof_gradu}, we obtain
	\begin{align*}
		\abs{\nabla u_{\sigma,\theta,d}(x)}_2 
		= \left(\sum_{j=1}^d \abs{\frac{\partial u_{\sigma,\theta,d}}{\partial x_j}(x)}^2 \right)^{1/2}
		\stackrel{\text{\link{eq:proof_gradu}}}{=} \left( \frac{\abs{v_{\sigma,\theta}(\abs{x}_2)}^2}{\abs{x}_2^2}  \sum_{j=1}^d \abs{x_j}^2 \right)^{1/2}
		= \abs{v_{\sigma,\theta}(\abs{x}_2)}
	\end{align*}
	such that by \autoref{lem:prop_uv}\link{item:vgammasigma} with $\gamma:=p-1$ we have that
	\begin{align*}
		A(\nabla u_{\sigma,\theta,d})(x)
		&=\abs{\nabla u_{\sigma,\theta,d}(x)}_2^{p-2}\,\nabla u_{\sigma,\theta,d}(x) \\ 
		&= \abs{v_{\sigma,\theta}(\abs{x}_2)}^{(p-1)-1} \, v_{\sigma,\theta}(\abs{x}_2) \, \frac{x}{\abs{x}_2} \\
		&= v_{(p-1)\sigma,\theta}(\abs{x}_2) \, \frac{x}{\abs{x}_2}
	\end{align*}
	holds for all $x\in\Omega$. Together with \link{eq:proof_gradu} this proves \link{eq:Anablau}, as well as
	\begin{align}\label{eq:representation_f}
		\int_{\Omega} \distr{A(\nabla u_{\sigma,\theta,d})(x)}{\nabla \psi(x)}_{\R^d} \d x
		= \int_\Omega v_{(p-1)\sigma,\theta}(\abs{x}_2) \, \distr{\frac{x}{\abs{x}_2}}{\nabla \psi(x)}_{\R^d}\d x
		= f_{\sigma,\theta,d}^{[p]}(\psi)
	\end{align}
	for each $\psi\in\D(\Omega)$. In other words, there holds $-\Delta_p(u_{\sigma,\theta,d})= f_{\sigma,\theta,d}^{[p]}$ in the weak sense.
	Finally, H\"older's inequality on $B_1(0)\subseteq \Omega$ proves
	\begin{align*}
		\abs{f_{\sigma,\theta,d}^{[p]}(\psi)}
		&\leq \int_\Omega \abs{v_{(p-1)\sigma,\theta}(\abs{x}_2)}\, \abs{\nabla \psi(x)}_2 \d x \\
		&\leq \norm{v_{(p-1)\sigma,\theta} \sep L_\infty(\R_+)} \, \int_{B_1(0)} \abs{\nabla \psi(x)}_2 \d x \\
		&\lesssim \norm{v_{(p-1)\sigma,\theta} \sep L_\infty(\R_+)} \, \norm{\psi \sep W_p^1(\Omega)}, \qquad \psi\in\D(\Omega).
	\end{align*}
	Since by definition $\D(\Omega)$ is dense $W^1_{p,0}(\Omega)$ we therefore have $f_{\sigma,\theta,d}^{[p]} \in (W^1_{p,0}(\Omega))'=W^{-1}_{p'}(\Omega)$ and the proof is complete.
\end{proof}

In order to provide further regularity assertions for $u_{\sigma,\theta,d}$ and $f_{\sigma,\theta,d}^{[p]}$, we will need the subsequent result which characterizes the smoothness and integrability of rotationally invariant functions. Therein $r_d$ has the same meaning as in \link{eq:def_rd}.

\begin{prop}\label{lem:rotation_reg}
	Let $0<a<b<\infty$. Assume that $g\colon \R_+ \to \C$ is measurable with support $\supp(g)\subseteq [a,b]$ and let $g_d:=g \circ r_d$. Then
\begin{enumerate}[label=(\roman*),ref=\roman*]
\item\label{item:Gwelldefined} $g_d \colon\R^d \to\C$ is well-defined almost everywhere.
\item\label{item:RotRegular} for $0<\rho,q\leq\infty$ and $s > d\, \max\{0, 1/\rho-1\}$ there holds
\begin{align}\label{equiv:Lp}
	g_d \in L_{\rho}(\R^d) 
	\qquad \text{if and only if} \qquad 
	g \in L_{\rho}(\R_+),
\end{align}
as well as
\begin{align*}
	g_d \in B^s_{\rho,q}(\R^d) 
	\qquad \text{if and only if} \qquad 
	g \in B^s_{\rho,q}(\R_+).
\end{align*}
\item\label{item:RotRegularSobolev} for $1<\rho<\infty$ and $k\in\N$ there holds
\begin{align*}
	g_d \in W^k_{\rho}(\R^d) 
	\qquad \text{if and only if} \qquad 
	g \in W^k_{\rho}(\R_+).
\end{align*}
\end{enumerate}
\end{prop}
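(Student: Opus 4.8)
The plan is to reduce everything to a one–dimensional statement about the radial profile by exploiting the rotational symmetry, since $g_d = g\circ r_d$ is supported in the spherical shell $\{a\le \abs{x}_2\le b\}$ which stays away from the origin. Away from the origin, $r_d$ is a smooth diffeomorphism onto its image in each coordinate chart, and (by compactness of the shell and a partition of unity) the local structure of $g_d$ near any point of its support looks like $g$ composed with a smooth change of variables in one variable, times a smooth function of the remaining $d-1$ variables — i.e.\ locally $g_d(x) \approx g(\phi(x))$ with $\phi$ smooth, $\nabla\phi \ne 0$.

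First I would prove \link{item:Gwelldefined}: since $\{x : \abs{x}_2 = c\}$ has $d$-dimensional Lebesgue measure zero for each $c$, and $g$ is only defined a.e.\ on $\R_+$, the set where $g_d$ is ill-defined is a null set in $\R^d$ (a countable union of spheres, or more carefully the preimage under $r_d$ of a null set in $\R_+$, which is null by the coarea formula / polar coordinates). Next, for \link{item:RotRegular}, the $L_\rho$-equivalence \link{equiv:Lp} is immediate from polar coordinates: $\norm{g_d\sep L_\rho(\R^d)}^\rho = \omega_{d-1}\int_a^b \abs{g(r)}^\rho r^{d-1}\d r$, and since $r$ is bounded between the positive constants $a$ and $b$, the weight $r^{d-1}$ is bounded above and below, giving $\norm{g_d\sep L_\rho(\R^d)}\sim \norm{g\sep L_\rho(\R_+)}$. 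For the Besov equivalence I would use the localization and diffeomorphism invariance of $B^s_{\rho,q}$: cover $\supp(g_d)$ by finitely many balls on each of which, after a rotation, $r_d$ can be written as a smooth function of a single coordinate with non-vanishing gradient; then invoke the invariance of $B^s_{\rho,q}$ under diffeomorphic coordinate transformations (Triebel, as already cited in the excerpt, \cite[Sect.\ 2.10.2]{T83}) together with the pointwise multiplier theorem \cite[Sect.\ 4.2.2]{T92} and the tensor/lifting structure to conclude that membership of $g_d$ in $B^s_{\rho,q}(\R^d)$ is equivalent, patch by patch, to membership of (a smooth cutoff of) $g$ in $B^s_{\rho,q}(\R_+)$; finally glue the patches. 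Both directions follow because each local coordinate change is bi-Lipschitz with smooth inverse on the relevant region. For \link{item:RotRegularSobolev} I would run the identical argument, using that $W^k_\rho = F^k_{\rho,2}$ for $1<\rho<\infty$, $k\in\N_0$, is again invariant under smooth diffeomorphisms and stable under multiplication by smooth compactly supported functions — alternatively one can argue directly by repeated application of the chain rule, noting that all derivatives of $r_d$ of all orders are bounded on the shell $\{a\le\abs{x}_2\le b\}$, so $D^\alpha g_d$ is a finite sum of products of bounded smooth functions with $g^{(j)}\circ r_d$ for $j\le\abs{\alpha}$, and conversely.

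The main obstacle is the Besov part of \link{item:RotRegular} in the quasi-Banach range $\rho<1$ (and $q<1$), where one cannot simply argue with classical difference quotients and must be careful that the quoted diffeomorphism-invariance and multiplier results genuinely cover all the parameters $0<\rho,q\le\infty$ with $s>d\max\{0,1/\rho-1\}$; here the hypothesis $s>d\max\{0,1/\rho-1\}$ is exactly what guarantees $B^s_{\rho,q}\subset L_\rho^{\mathrm{loc}}$ so that the spaces consist of (locally integrable) functions and the localization/change-of-variables manipulations are legitimate. A secondary technical point is bookkeeping the passage between $\R^d$ and a shell contained in it versus $\R_+$ and an interval $[a',b']\supset[a,b]$: one works with a fixed smooth cutoff $\chi$ equal to $1$ on $[a,b]$ and supported in a slightly larger interval (exactly as in the proof of \autoref{lem:prop_uv}\link{item:Regv}), so that multiplying by $\chi$ and extending by zero identifies the relevant function spaces on $\R_+$ with those on $\R$, and symmetrically on the $\R^d$ side one uses that $g_d$ already has compact support bounded away from $0$ and from $\infty$.
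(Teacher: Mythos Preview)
Your approach differs from the paper's. For the Besov and Sobolev equivalences in \link{item:RotRegular} and \link{item:RotRegularSobolev} the paper does not argue via localization and diffeomorphism invariance at all; it simply invokes the results of Sickel, Skrzypczak, and Vyb\'iral \cite{SicSkrVyb2012} on radial functions (their Theorem~2 for $\rho=\infty$ and Corollaries~1~\&~2 for $0<\rho<\infty$, the latter also covering $W^k_\rho=F^k_{\rho,2}$), which give exactly the desired equivalences for profiles supported away from the origin. The $L_\rho$ equivalence and item~\link{item:Gwelldefined} are handled essentially as you describe, and $d=1$ is treated as trivial.

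Your route is plausible, but as written it has a real gap at the step you label ``tensor/lifting structure.'' First, a linear rotation does not turn $r_d$ into a function of a single coordinate; what you need is a \emph{nonlinear} local chart with $y_1=r_d(x)$ and $y'=(y_2,\ldots,y_d)$ angular, after which the localized $g_d$ becomes $g(y_1)\,\chi(y')$ for a fixed smooth cutoff $\chi$. Diffeomorphism invariance then reduces the question to whether $g\otimes\chi\in B^s_{\rho,q}(\R^d)$ is equivalent to $g\in B^s_{\rho,q}(\R)$, and this dimensional bridge is the nontrivial content you have not supplied. It can be done (e.g.\ for one implication take $k$-th differences only in the $e_1$-direction and use $\Delta_{he_1}^k(g\otimes\chi)=(\Delta_h^k g)\otimes\chi$; for the other, use a characterization by coordinate-direction differences together with the smoothness of $\chi$), and the hypothesis $s>d\max\{0,1/\rho-1\}$ is precisely what makes such difference arguments available in both dimensions. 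But these arguments must be written out and checked in the quasi-Banach range; ``tensor/lifting structure'' and a citation of the multiplier theorem do not yet do that work. Citing \cite{SicSkrVyb2012}, as the paper does, sidesteps the whole issue.
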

\begin{proof}
With $g$ and $r_d$ also $g_d$ is measurable such that it can be represented as an a.e.\ convergent pointwise limit of simple functions.
This proves \link{item:Gwelldefined}.
	
If $d=1$, then the equivalences in \link{item:RotRegular} and \link{item:RotRegularSobolev} are trivial, as $g$ vanishes in a neighborhood of the origin. So let us assume $d\geq 2$. Then for $\rho<\infty$ the first assertion in \link{item:RotRegular} follows from a simple transformation into (generalized) polar coordinates $x=r\,\vartheta(\phi)$, $(r,\phi) \in [0,\infty)\times \Phi$:
\begin{align*}
	\norm{g_d \sep L_{\rho}(\R^d)}^\rho 
	= \int_{\R^d} \abs{g(r_d(x))}^\rho \d x 
	&= \int_\Phi \int_a^b \abs{g(r)}^\rho \, r^{d-1} T(\phi) \, \d r \, \d \phi 
	&\sim 
	\norm{g \sep L_{\rho}((0,\infty))}^\rho,  
\end{align*} 
where we used that $\supp(g)\subseteq [a,b]$ and that $T$ is some tensor product of trigonometric functions defined on $\Phi \subset [-\pi,\pi]^{d-1}$. For $\rho=\infty$ the equivalence \link{equiv:Lp} is obvious.
	
It remains to prove the equivalences for multivariate Besov and Sobolev spaces. 
In case of $B^s_{\rho,q}$ and $\rho=\infty$, this follows from results due to Sickel \emph{et al.} \cite[Thm.\ 2]{SicSkrVyb2012}, while the case $0<\rho<\infty$ is covered by \cite[Cor.~1~\&~2]{SicSkrVyb2012}. 
However, \cite[Cor.~1~\&~2]{SicSkrVyb2012} also covers the assertion for Sobolev spaces since $W^k_\rho=F^k_{\rho,2}$ if $1<\rho<\infty$.
\end{proof}

\begin{lemma}[Regularity of $u_{\sigma,\theta,d}$, $A(\nabla u_{\sigma,\theta,d})$, and $f^{[p]}_{\sigma,\theta,d}$]\label{lem:reg_u}
Let $d\in\N$ and $1<p,\theta<\infty$, as well as $\sigma\in\R_+$. Further let $\Omega \subseteq \R^d$ be either $\R^d$ itself, a bounded Lipschitz domain, or an interval (if $d=1$), and assume $B_1(0)\subseteq\Omega$. Moreover, let $0<\rho,q\leq\infty$ and $s\in\R$ with $d\, \max\{ 0, 1/\rho-1 \} < s < 1$.
Then
	\begin{enumerate}[label=(\roman*),ref=\roman*]
		\item \label{item:reg_ud} there holds 
			$$
				u_{\sigma,\theta,d} \in B^{1+s}_{\rho,q}(\Omega) 
				\qquad\text{if and only if}\qquad
				w_{\sigma,\theta} \in B^{s}_{\rho,q}(\R)
			$$
			and $1<\rho<\infty$ implies that
			$$
				u_{\sigma,\theta,d} \in W^2_{\rho}(\Omega) 
				\qquad\text{if and only if}\qquad
				w_{\sigma,\theta} \in W^1_{\rho}(\R).
			$$
		\item \label{item:reg_A} there holds 
		$$
			A(\nabla u_{\sigma,\theta,d}) \in \big( B^{s}_{\rho,q}(\Omega) \big)^d
			\qquad\text{if and only if}\qquad
			w_{(p-1)\sigma,\theta} \in B^{s}_{\rho,q}(\R)
		$$
		and $1<\rho<\infty$ implies that
		$$
			A(\nabla u_{\sigma,\theta,d}) \in \big( W^1_{\rho}(\Omega) \big)^d
			\qquad\text{if and only if}\qquad
			w_{(p-1)\sigma,\theta} \in W^1_{\rho}(\R).
		$$
	\end{enumerate}
	Additionally  assume that $\min\{\rho,q\}>1$. Then
	\begin{enumerate}[label=(\roman*),ref=\roman*, resume]	
		\item \label{item:f_in_B} $A(\nabla u_{\sigma,\theta,d}) \in \big( B^{s}_{\rho,q}(\Omega) \big)^d$ implies $f_{\sigma,\theta,d}^{[p]} \in B^{-1+s}_{\rho,q}(\Omega)$,
		\item \label{item:f_in_Lrho} $w'_{(p-1)\sigma,\theta} \in L_{\rho}(\R)$ implies $f_{\sigma,\theta,d}^{[p]} \in L_{\rho}(\Omega)$.
	\end{enumerate}
\end{lemma}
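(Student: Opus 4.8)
The plan is to prove the four assertions of \autoref{lem:reg_u} by reducing everything to the already-established one-dimensional regularity of $w_{\sigma,\theta}$ (resp.\ $w_{(p-1)\sigma,\theta}$) via the two transfer results \autoref{lem:prop_uv} and \autoref{lem:rotation_reg}, and then handling the two implications for $f^{[p]}_{\sigma,\theta,d}$ separately using the divergence structure from \autoref{lem:p-poisson}.

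For \link{item:reg_ud}: recall from \autoref{lem:prop_uv}\link{item:supp_uv} that $u_{\sigma,\theta}$ has support in $[1/4,3/4]$, so we may apply \autoref{lem:rotation_reg}\link{item:RotRegular} with $g=u_{\sigma,\theta}$ and $g_d=u_{\sigma,\theta}\circ r_d$ — which, by the discussion following \link{eq:def_uf} and the identification of $u_{\sigma,\theta,d}$ with $(u_{\sigma,\theta}\circ r_d)|_\Omega\in C^1(\Omega)$ in the proof of \autoref{lem:p-poisson}, is (the restriction to $\Omega$ of) $u_{\sigma,\theta,d}$. Since $s>d\max\{0,1/\rho-1\}$ we are in the admissible range of that proposition, and it can also be applied at smoothness $1+s$ provided $1+s>d\max\{0,1/\rho-1\}$, which holds; hence $u_{\sigma,\theta,d}\in B^{1+s}_{\rho,q}(\Omega)$ iff $u_{\sigma,\theta}\in B^{1+s}_{\rho,q}(\R_+)$. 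Combined with \autoref{lem:prop_uv}\link{item:Regv} (valid because $s<1$), this is equivalent to $w_{\sigma,\theta}\in B^s_{\rho,q}(\R)$, as claimed. The Sobolev statement is identical, using \autoref{lem:rotation_reg}\link{item:RotRegularSobolev} with $k=2$ and \autoref{lem:prop_uv}\link{item:Regv2}. (On bounded Lipschitz domains or intervals one passes between $\Omega$ and $\R^d$ using the restriction definition of the spaces together with \autoref{lem:embedding} / Rychkov extension; the compact support of $u_{\sigma,\theta,d}$ inside $B_{4/5}(0)\subseteq\Omega$ makes this harmless.)

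For \link{item:reg_A}: by \autoref{lem:p-poisson}\link{item:Div} we have $A(\nabla u_{\sigma,\theta,d})=\nabla u_{(p-1)\sigma,\theta,d}$ componentwise. Thus $A(\nabla u_{\sigma,\theta,d})\in(B^s_{\rho,q}(\Omega))^d$ iff $\nabla u_{(p-1)\sigma,\theta,d}\in(B^s_{\rho,q}(\Omega))^d$; together with $u_{(p-1)\sigma,\theta,d}\in B^s_{\rho,q}(\Omega)$ (automatic from the $C^1$ identification and compact support, via \autoref{lem:embedding}), \autoref{lem:embedding}\link{item:lift} makes this equivalent to $u_{(p-1)\sigma,\theta,d}\in B^{1+s}_{\rho,q}(\Omega)$, which by part \link{item:reg_ud} applied with $\sigma$ replaced by $(p-1)\sigma$ is equivalent to $w_{(p-1)\sigma,\theta}\in B^s_{\rho,q}(\R)$. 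The Sobolev case runs the same way with $W^1_\rho$ in place of $B^s_{\rho,q}$ and the Sobolev halves of part \link{item:reg_ud} and \autoref{lem:embedding}\link{item:lift}.

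For \link{item:f_in_B} and \link{item:f_in_Lrho}, now assuming $\min\{\rho,q\}>1$: from \link{eq:representation_f} in the proof of \autoref{lem:p-poisson} we have $f^{[p]}_{\sigma,\theta,d}(\psi)=\int_\Omega\distr{A(\nabla u_{\sigma,\theta,d})(x)}{\nabla\psi(x)}_{\R^d}\d x$, i.e.\ $f^{[p]}_{\sigma,\theta,d}=-\Div{A(\nabla u_{\sigma,\theta,d})}$ in $\D'(\Omega)$. For \link{item:f_in_B}: the distributional divergence maps $(B^s_{\rho,q}(\Omega))^d$ continuously into $B^{-1+s}_{\rho,q}(\Omega)$ — this is a lifting/differentiation property of Besov spaces on $\R^d$ (each $\partial_j\colon B^s_{\rho,q}\to B^{-1+s}_{\rho,q}$, cf.\ the wavelet or Fourier-analytic definition), transferred to $\Omega$ by Rychkov extension together with the compact support of $A(\nabla u_{\sigma,\theta,d})\subseteq B_{4/5}(0)$; applying it to the hypothesis $A(\nabla u_{\sigma,\theta,d})\in(B^s_{\rho,q}(\Omega))^d$ gives the claim. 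For \link{item:f_in_Lrho}: if $w'_{(p-1)\sigma,\theta}\in L_\rho(\R)$ then by \autoref{lem:Regw}\link{item:w'_in_L_rho}-style reasoning (or directly \autoref{lem:prop_uv}\link{item:Regv2}) $u_{(p-1)\sigma,\theta}\in W^2_\rho$, hence by part \link{item:reg_ud} (Sobolev half, applied with $(p-1)\sigma$) $u_{(p-1)\sigma,\theta,d}\in W^2_\rho(\Omega)$, so $A(\nabla u_{\sigma,\theta,d})=\nabla u_{(p-1)\sigma,\theta,d}\in(W^1_\rho(\Omega))^d$ and therefore its divergence $f^{[p]}_{\sigma,\theta,d}\in L_\rho(\Omega)$. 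The condition $\rho>1$ is exactly what is needed for the Triebel--Lizorkin identification $W^k_\rho=F^k_{\rho,2}$ used throughout, and $q>1$ plays no role beyond ensuring the Besov parameters stay in the range where the differentiation lift behaves well; I expect the only genuinely delicate point to be making the passage between $\Omega$ and $\R^d$ rigorous for general Lipschitz $\Omega$ (handled uniformly by Rychkov extension plus the compact support of the objects involved), everything else being a bookkeeping chain through the lemmas already proved.
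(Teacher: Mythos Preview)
Your argument is correct, and for parts \link{item:reg_ud} and \link{item:reg_A} it coincides with the paper's proof (the cut-off passage between $\Omega$ and $\R^d$ that you flag in parentheses is exactly what the paper spells out).

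For parts \link{item:f_in_B} and \link{item:f_in_Lrho} you take a genuinely different and shorter route. The paper proves \link{item:f_in_B} by duality: it identifies $A(\nabla u_{\sigma,\theta,d})_j$ with an element of $\widetilde{B}^s_{\rho,q}(\Omega)=(B^{-s}_{\rho',q'}(\Omega))'$ (this is where the hypothesis $\min\{\rho,q\}>1$ enters), bounds $|f(\psi)|$ against $\|\psi\mid \widetilde{B}^{1-s}_{\rho',q'}(\Omega)\|$, and dualizes back. You instead apply the differentiation property $\partial_j\colon B^s_{\rho,q}\to B^{s-1}_{\rho,q}$ directly to each component of $A(\nabla u_{\sigma,\theta,d})$; note that this is already available on $\Omega$ from \autoref{lem:embedding}\link{item:lift} (shifted by one), so you do not even need the Rychkov extension here, and the argument in fact goes through without the restriction $\min\{\rho,q\}>1$. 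For \link{item:f_in_Lrho} the paper computes the radial profile of $f$ explicitly, $f(r)=-v'_{(p-1)\sigma,\theta}(r)-(d-1)r^{-1}v_{(p-1)\sigma,\theta}(r)$, verifies $f\in L_\rho(\R_+)$, and then checks via integration by parts in polar coordinates that $f\circ r_d$ represents $f^{[p]}_{\sigma,\theta,d}$. Your chain $w'_{(p-1)\sigma,\theta}\in L_\rho\Rightarrow w_{(p-1)\sigma,\theta}\in W^1_\rho\Rightarrow u_{(p-1)\sigma,\theta,d}\in W^2_\rho\Rightarrow A(\nabla u)\in (W^1_\rho)^d\Rightarrow \mathrm{div}\,A(\nabla u)\in L_\rho$ avoids all of this. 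What the paper's computation buys is an explicit pointwise formula for the right-hand side, which is informative but not required by the statement; what your approach buys is brevity and a uniform mechanism for both \link{item:f_in_B} and \link{item:f_in_Lrho}.
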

\begin{proof}
	Recall that $u_{\sigma,\theta,d}\in\D'(\Omega)$ can be identified with the function $u_{\sigma,\theta}\circ r_d$ restricted to $\Omega$. Hence, $u_{\sigma,\theta}\circ r_d\in B^{1+s}_{\rho,q}(\R^d)$ implies $u_{\sigma,\theta,d}\in B^{1+s}_{\rho,q}(\Omega)$. On the other hand, if $u_{\sigma,\theta,d}\in B^{1+s}_{\rho,q}(\Omega)$, then there exists $\widetilde{u}\in B^{1+s}_{\rho,q}(\R^d)$ with $\widetilde{u}\big|_{\Omega}=u_{\sigma,\theta,d}$. Now let $\chi\in C^{\infty}(\R^d)$ with
	$$
		\chi(x)= \begin{cases}
			1 &\quad\text{if}\quad x\in B_{4/5}(0),\\
			0 &\quad\text{if}\quad x\in \R^d\setminus B_1(0).
		\end{cases}
	$$ 
	Then $u_{\sigma,\theta}\circ r_d=\chi\,\widetilde{u} \in B^{1+s}_{\rho,q}(\R^d)$ due to \cite[Sect.\ 4.2.2]{T92}. Therefore, $u_{\sigma,\theta,d}\in B^{1+s}_{\rho,q}(\Omega)$ is equivalent to $u_{\sigma,\theta}\circ r_d\in B^{1+s}_{\rho,q}(\R^d)$. By \autoref{lem:prop_uv}\link{item:supp_uv} and \autoref{lem:rotation_reg}\link{item:RotRegular} this holds if and only if $u_{\sigma,\theta} \in B^{1+s}_{\rho,q}(\R_+)$. 	
	Since we assume $s<1$, we can use  \autoref{lem:prop_uv}\link{item:Regv} to see that this is equivalent to $w_{\sigma,\theta} \in B^{s}_{\rho,q}(\R)$. Thus, we have shown \link{item:reg_ud} in the case of Besov spaces. 
	For Sobolev spaces we can argue similarly.
	
	Next we apply \link{item:reg_ud} to deduce that $w_{(p-1)\sigma,\theta} \in B^{s}_{\rho,q}(\R)$ is equivalent to $u_{(p-1)\sigma,\theta,d} \in B^{1+s}_{\rho,q}(\Omega)$. By \autoref{lem:embedding}\link{item:lift} this holds if and only if $u_{(p-1)\sigma,\theta,d} \in B^{s}_{\rho,q}(\Omega)$ and $\nabla u_{(p-1)\sigma,\theta,d} \in (B^{s}_{\rho,q}(\Omega))^d$. 
	Since we assume that $s<1$, the first condition is always fulfilled (cf.\ the proof of \autoref{lem:prop_uv}!), and by \autoref{lem:p-poisson}\link{item:Div} $\nabla u_{(p-1)\sigma,\theta,d}$ is nothing but
	$A(\nabla u_{\sigma,\theta,d})$.
	Also here the proof for Sobolev spaces is essentially the same.

	Let us prove \link{item:f_in_B}. To this end, we note that $A(\nabla u_{\sigma,\theta,d}) \in \big( B^{s}_{\rho,q}(\Omega) \big)^d$ implies that for every $j=1,\ldots,d$ we have
	\begin{align*}
		A(\nabla u_{\sigma,\theta,d})_j \in \widetilde{B}^{s}_{\rho,q}(\Omega):=\big\{ g \in \D'(\Omega) \sep \exists\, \widetilde{g}\in B^{s}_{\rho,q}(\R^d) \text{ with } g=\widetilde{g}\big|_{\Omega} \text{ and } \supp(\widetilde{g})\subseteq \overline{\Omega} \big\}
	\end{align*}
	according to \autoref{lem:p-poisson}, where we set 
	\begin{align*}
		\norm{g \sep \widetilde{B}^{s}_{\rho,q}(\Omega)} 
		:= \inf_{\substack{\widetilde{g}\in B^{s}_{\rho,q}(\R^d):\, g=\widetilde{g}|_{\Omega},\\ \supp(\widetilde{g})\subseteq \overline{\Omega}}} \norm{\widetilde{g} \sep B^{s}_{\rho,q}(\R^d)},
	\end{align*}
	cf.\ Triebel \cite[Def.\ 2.1]{T08}. Further from \cite[Thm.\ 3.30]{T08} and $\min\{\rho,q\}>1$ it follows that $\widetilde{B}^{s}_{\rho,q}(\Omega) = ( B^{-s}_{\rho',q'}(\Omega) )'$ such that for all $j=1,\ldots,d$ we can estimate
	\begin{align*}
		\abs{A(\nabla u_{\sigma,\theta,d})_j(\varphi)} \lesssim \norm{\varphi \sep B^{-s}_{\rho',q'}(\Omega)}, \qquad \varphi \in \D(\Omega).
	\end{align*}
	Therefore the representation formula \link{eq:representation_f} together with \autoref{lem:embedding}\link{item:lift} yield that for all $\psi\in\D(\Omega)$ there holds
	\begin{align*}
		\abs{f_{\sigma,\theta,d}^{[p]}(\psi)}
		&= \abs{\sum_{j=1}^d \int_\Omega A(\nabla u_{\sigma,\theta,d})_j(x)\, \frac{\partial\psi}{\partial x_j}(x)\d x} 
		\\
		&
		\leq \sum_{j=1}^d \abs{A(\nabla u_{\sigma,\theta,d})_j \left( \frac{\partial\psi}{\partial x_j} \right)}
		\\
		&\lesssim \sum_{j=1}^d \norm{\frac{\partial\psi}{\partial x_j} \sep B^{-s}_{\rho',q'}(\Omega)} 
		\\
		&
		\lesssim \norm{\psi \sep B^{1-s}_{\rho',q'}(\Omega)} 
		\\
		&
		\leq \norm{\psi \sep \widetilde{B}^{1-s}_{\rho',q'}(\Omega)}. 
	\end{align*}
	Now we again employ \cite[Thm.\ 3.30]{T08} to see that $\D(\Omega)$ is dense in $\widetilde{B}^{1-s}_{\rho',q'}(\Omega)$ and hence $f_{\sigma,\theta,d}^{[p]} \in \big( \widetilde{B}^{1-s}_{\rho',q'}(\Omega) \big)'= B^{-1+s}_{\rho,q}(\Omega)$, as claimed.
	
	It remains to prove \link{item:f_in_Lrho}. For this purpose, note that for $f_{\sigma,\theta,d}^{[p]}\in L_{\rho}(\Omega)=(L_{\rho'}(\Omega))'$ it is sufficient to find $f_d\in L_{\rho}(\Omega)$ such that 
	\begin{align}\label{eq:proof_f}
		f_{\sigma,\theta,d}^{[p]}(\psi) = \int_\Omega f_d(x)\,\psi(x)\d x, \qquad \psi\in\D(\Omega),
	\end{align} 
	since $\D(\Omega)$ is dense in $L_{\rho'}(\Omega)$ if $1\leq \rho' < \infty$. We claim that this $f_d$ is given by the restriction of $f_d=f\circ r_d$ to $\Omega$, where
	\begin{align*}
		f(r):=-v_{(p-1)\sigma,\theta}'(r) - v_{(p-1)\sigma,\theta}(r)\,\frac{d-1}{r} \qquad \text{for a.e.}\qquad r>0.
	\end{align*}
	Recall that due to \autoref{lem:prop_uv} $v_{(p-1)\sigma,\theta}$ is continuous with support in $[1/4,3/4]$. Hence, the function $r\mapsto g(r):=v_{(p-1)\sigma,\theta}(r)/r$ belongs to $L_{\rho}(\R_+)$. Moreover, it is clear that our assumption $w_{(p-1)\sigma,\theta}'\in L_{\rho}(\R)$ implies that also $v_{(p-1)\sigma,\theta}'\in L_{\rho}(\R_+)$. Therefore, we have $f\in L_{\rho}(\R_+)$ and from \autoref{lem:rotation_reg} we conclude $f_d\in L_{\rho}(\R^d)$. This shows that indeed $f_d\in L_{\rho}(\Omega)$. Thus, we are left with proving \link{eq:proof_f}. To this end, let $\psi\in\D(\Omega)$ be arbitrarily fixed and assume for a moment that $d\geq 2$.	
	Then $B_1(0)\subseteq \Omega$ and a transformation into polar coordinates $x=r\,\vartheta(\phi)$, $(r,\phi)\in [0,\infty)\times \Phi$, yields
	\begin{align}\label{eq:intF}
		\int_{\Omega} f_d(x) \, \psi(x)\d x
		&= \int_\Phi \int_{1/4}^{3/4} f(r)\, \psi(r\,\vartheta(\phi)) \, r^{d-1} \, T(\phi) \d r \d\phi
	\end{align}	
	(cf.\ the proof of \autoref{lem:rotation_reg}). 
	Since $v'_{(p-1)\sigma,\theta}$ belongs to $L_1(\R_+)$ and $\psi$ is smooth, we may use integration by parts to see that	
	\begin{align*}
		& \int_{1/4}^{3/4} v'_{(p-1)\sigma,\theta}(r) \, \psi(r\,\vartheta(\phi)) \, r^{d-1} \d r \\
		&\qquad= \big[ v_{(p-1)\sigma,\theta}(r)\,\psi(r\,\vartheta(\phi)) \, r^{d-1}\big]_{r=1/4}^{3/4} - \int_{1/4}^{3/4} v_{(p-1)\sigma,\theta}(r) \, \frac{\d }{\d r} \!\left(\psi(r\,\vartheta(\phi)) \, r^{d-1}\right)(r) \d r
	\end{align*}
	is finite, because the boundary term vanishes and
	\begin{align*}
		\frac{\d }{\d r} \!\left(\psi(r\,\vartheta(\phi)) \, r^{d-1}\right)(r)
		&=\frac{\d }{\d r} \left(\psi(r\,\vartheta(\phi))\right)(r)\, r^{d-1} + (d-1) \, \psi(r\,\vartheta(\phi))\, r^{d-2}\\
		&=\distr{\nabla\psi(r\,\vartheta(\phi))}{\vartheta(\phi)}_{\R^d}\, r^{d-1} +  \frac{d-1}{r}\,\psi(r\,\vartheta(\phi))\, r^{d-1}
	\end{align*}
	as well as $v_{(p-1)\sigma,\theta}$ are bounded on $[1/4,3/4]$.
	Hence, for the inner integral in \link{eq:intF} we find
	\begin{align*}
		&\int_{1/4}^{3/4} f(r) \, \psi(r\,\vartheta(\phi)) \, r^{d-1} \d r \\
		&\qquad = -\int_{1/4}^{3/4} v'_{(p-1)\sigma,\theta}(r) \, \psi(r\,\vartheta(\phi)) \, r^{d-1} \d r 
					- \int_{1/4}^{3/4} v_{(p-1)\sigma,\theta}(r)\,\frac{d-1}{r} \, \psi(r\,\vartheta(\phi)) \, r^{d-1} \d r \\
		&\qquad = \int_{1/4}^{3/4} v_{(p-1)\sigma,\theta}(r) \, \distr{ \nabla\psi(r\,\vartheta(\phi)) }{ \frac{r\,\vartheta(\phi)}{r} }_{\R^d} \, r^{d-1} \d r
	\end{align*}
	and thus \link{eq:representation_f} shows that we indeed have \link{eq:proof_f}:
	\begin{align}
		\int_{\Omega} f_d(x) \, \psi(x)\d x
			&= \int_\Phi \int_{1/4}^{3/4} f(r)\, \psi(r\,\vartheta(\phi)) \, r^{d-1} \, T(\phi) \d r \d\phi \nonumber\\
			&=\int_\Phi \int_{1/4}^{3/4} v_{(p-1)\sigma,\theta}(r) \, \distr{ \nabla\psi(r\,\vartheta(\phi)) }{ \frac{r\,\vartheta(\phi)}{r} }_{\R^d} \, r^{d-1} \, T(\phi) \d r \d\phi \nonumber\\
			&=\int_{\Omega} v_{(p-1)\sigma,\theta,d}(x) \, \distr{\nabla\psi(x)}{\frac{x}{\abs{x}_2}}_{\R^d} \d x  \label{eq:int_Fpsi}\\
		&=f_{\sigma,\theta,d}^{[p]}(\psi). \nonumber
	\end{align}
	Finally, a similar calculation shows that \link{eq:int_Fpsi} remains valid also for $d=1$. So, the proof is complete.
\end{proof}

Now we are well-prepared to give profound proofs of our main results stated in \autoref{thm:main} and \autoref{thm:mainL}.

\subsection{Proof of \autoref{thm:main}}\label{subsect:proof-main}
\begin{proof}
	Let $d\in\N$, as well as $2 \leq p<\infty$, and $0<\epsilon<1/p$ be given fixed. Further let $\Omega \subseteq \R^d$ be either $\R^d$ itself, a bounded Lipschitz domain, or an interval (if $d=1$). Since $\Omega$ is open it contains inner points. By a simple translation and dilation argument (see, e.g.\ \cite[Sect.\ 4]{DahDieHar+2016}) we may w.l.o.g.\ assume that the Euclidean ball of radius one, $B_1(0)$, is contained in $\Omega$. In what follows we will choose specific values $\sigma\in(0,1)$ as well as $\theta\in(1,\infty)$ and define $u:=u_{\sigma,\theta,d}$ and $f:=f_{\sigma,\theta,d}^{[p]}$ according to \link{eq:def_uf}. From \autoref{lem:p-poisson} it then follows that $u\in W^1_{p,0}(\Omega)$ is a weak solution to \link{eq:strong_ppoisson} with right-hand side $f\in W^{-1}_{p'}(\Omega)$ and that the supports of $u$ and $f$ are contained in $B_{4/5}(0)$.
	
	Given $1<\mu\leq\infty$ and $\epsilon\,(p-1)<\lambda < 1-\epsilon$ we choose $\theta$ such that $0<1-\epsilon < 1/\theta < 1$ and define
	\begin{align*}
		\sigma:= \frac{\lambda}{p-1}-\frac{1-1/\theta}{(p-1)\,\mu}.
	\end{align*}
	Then it is easily seen that
	\begin{align}\label{eq:cond_sigma}
		0< \frac{\lambda}{p-1}-\epsilon < \sigma \leq (p-1)\,\sigma < \frac{1}{\theta} < 1.
	\end{align}	
	Indeed, the lower bound on $\lambda$ shows that $\lambda/(p-1)-\epsilon$ is strictly positive. If $1<\mu<\infty$, we note that $p\geq 2$ implies $1-\epsilon\,(p-1)\,\mu < 1-\epsilon < 1/\theta$ and hence
	\begin{align*}
		\frac{\lambda}{p-1} -\epsilon
		= \frac{\lambda}{p-1} + \frac{1-\epsilon\,(p-1)\,\mu-1}{(p-1)\,\mu}
		< \frac{\lambda}{p-1}+\frac{1/\theta-1}{(p-1)\,\mu}
		= \sigma,
	\end{align*}	
	while the corresponding estimate for $\mu=\infty$ is trivial since $\epsilon>0$.
	Moreover, $1/\theta < 1$ yields
	\begin{align*}
		\sigma \leq (p-1)\,\sigma \leq \lambda < 1-\epsilon < \frac{1}{\theta} < 1
	\end{align*}
	which completes the proof of \link{eq:cond_sigma}.	
	
	Next we note that \link{eq:cond_sigma} particularly implies that 
	\begin{align*}
		\min\left\{ \theta\,(1+\sigma), \frac{1-\sigma}{1-1/\theta}\right\} > 1.
	\end{align*}
	So, we can employ \autoref{lem:Regw}\link{item:w_in_B} to see that for $1 \leq \rho \leq \infty$ there holds 
	\begin{align}\label{eq:proof_winB}
		w_{\sigma,\theta} \in B^{s}_{\rho,q}(\R)
		\quad\text{ if and only if }\quad
		s=s_\rho \;\text{ and }\; q=\infty,
		\quad\text{or}\quad 
		s<s_\rho \;\text{ and }\; 0<q \leq \infty,
	\end{align}
	where 
	\begin{align*}
		s_\rho
		:=\sigma + \frac{1-1/\theta}{\rho} 
		= \frac{\lambda}{p-1} + \left( \frac{1}{\rho} - \frac{1}{(p-1)\,\mu} \right) \left(1-\frac{1}{\theta}\right).
	\end{align*}
	Note that our assumptions imply that $0<1-1/\theta<\epsilon$, 
	\begin{align*}
		\frac{1}{\rho} - \frac{1}{(p-1)\,\mu} \in \left\{\begin{array}{lcrl}
			(-1,0) 
				&\quad\text{if}&  (p-1)\,\mu < \rho \!\!\!&\leq \infty, \\
			\{0\}  
				&\quad\text{if}&  \rho \!\!\!&=(p-1)\,\mu, \\
			(0,1)  
				&\quad\text{if}&  1 < \rho \!\!\!&< (p-1)\,\mu, 
		\end{array}\right. 
	\end{align*}
	as well as $0< \lambda/(p-1) \pm \epsilon<1$. If $\rho$ is large, we have $\lambda/(p-1)-\epsilon < s_\rho < \lambda/(p-1)$. Thus \link{eq:proof_winB} and \autoref{lem:reg_u}\link{item:reg_ud} prove 
	\begin{align*}
		u=u_{\sigma,\theta,d} \in B^{1+\frac{\lambda}{p-1}-\epsilon}_{\rho,q}(\Omega) \setminus B^{1+\frac{\lambda}{p-1}}_{\rho,q}(\Omega), \qquad 0<q\leq\infty,
	\end{align*}	
	for this case. The regularity statements for $u$ in the remaining cases are obtained likewise.
	
	Similarly, \link{eq:cond_sigma} and \autoref{lem:Regw}\link{item:w_in_B} show that for $1 \leq \rho \leq \infty$ there holds 
	\begin{align*}
		w_{(p-1)\sigma,\theta} \in B^{s}_{\rho,q}(\R)
		\quad\text{ if and only if }\quad
		s=\widetilde{s}_\rho \;\text{ and }\; q=\infty,
		\quad\text{or}\quad 
		s<\widetilde{s}_\rho \;\text{ and }\; 0<q \leq \infty,
	\end{align*}
	where now (depending on the relation of $\rho$ and $\mu$ to each other)
	\begin{align*}
		\widetilde{s}_\rho
		:=(p-1)\,\sigma + \frac{1-1/\theta}{\rho} 
		= \lambda + \left( \frac{1}{\rho} - \frac{1}{\mu} \right) \left(1-\frac{1}{\theta}\right) \in(\lambda - \epsilon, \lambda + \epsilon)\subsetneq (0,1).
	\end{align*}
	Therefore, we can use \autoref{lem:reg_u}\link{item:reg_A} to deduce the regularity statements for $A(\nabla u)$. In particular 
	we have 
	$A(\nabla u) \in \big( B_{\mu,\infty}^\lambda(\Omega) \big)^d$ such that by \autoref{lem:reg_u}\link{item:f_in_B} $f=f_{\sigma,\theta,d}^{[p]} \in B^{-1+\lambda}_{\mu,\infty}(\Omega)$,
	as claimed.
\end{proof}

\subsection{Proof of \autoref{thm:mainL}}\label{subsect:proof-mainL}

In order to show \autoref{thm:mainL} we essentially follow the lines of the proof of \autoref{thm:main}. So let us focus on the necessary modifications only.
\begin{proof}
	Given $1<\mu\leq \infty$ and $0<\epsilon<\min\{1/(p-1), 1-1/(p-1)\}$ (note that this time $p>2$!) we choose $\theta$ such that $0<1-\epsilon < 1/\theta <1$ and define
	\begin{align*}
		\sigma:= \frac{1}{p-1}-\frac{1-1/\theta}{(p-1)\,\mu}.
	\end{align*}
	Then there holds
	\begin{align*}
		0<\sigma < \frac{1}{\theta} < (p-1)\,\sigma \leq 1.
	\end{align*}
	Indeed, $\mu>1$ and $0 < 1/\theta < 1$ show that $0 \leq (1-1/\theta)/\mu < 1-1/\theta < 1$. This proves $0<\sigma$ as well as $1/\theta < (p-1)\,\sigma \leq 1$. Hence, we also have $\sigma \leq 1/(p-1) < 1-\epsilon < 1 /\theta$ due to our assumption on $\epsilon$.
	
	Now the claimed regularity of $u$ follows exactly as in the proof of \autoref{thm:main}, where this time our restrictions on $\epsilon$ ensure that $0<1/(p-1)\pm \epsilon < 1$.
	
	In order to prove the regularity statement for $A(\nabla u)$ we like to apply \autoref{lem:reg_u}\link{item:reg_A}. To this end, we have to show that $w_{(p-1)\sigma,\theta}\in W^1_\rho(\R)$ for $1<\rho<\infty$ if and only if $\rho<\mu$. 
	By \autoref{lem:Regw}\link{item:w_in_L_rho} this reduces to the claim $w'_{(p-1)\sigma,\theta}\in L_\rho(\R)$.
	If $\mu=\infty$, we actually have $(p-1)\,\sigma = 1$. Therefore, from \autoref{lem:Regw}\link{item:w'_in_L_rho} it follows that $w'_{(p-1)\sigma,\theta}\in L_\rho(\R)$ for all $0<\rho \leq \infty$.
	 On the other hand, if $\mu<\infty$, then $0<(p-1)\,\sigma < 1$. Hence, in this case we have $w'_{(p-1)\sigma,\theta}\in L_\rho(\R)$ if and only if
	\begin{align*}
		\frac{1-(p-1)\,\sigma}{1-1/\theta} = \frac{1}{\mu} < \frac{1}{\rho}.
	\end{align*}
	In conclusion, $A(\nabla u) \in \big( W^1_\rho(\Omega) \big)^d$ for $1<\rho<\infty$ is equivalent to $\rho < \mu$, as claimed.

	It remains to prove that $f=f_{\sigma,\theta,d}^{[p]}$ belongs to $L_\nu(\Omega)$. If $\nu>1$, this follows from \autoref{lem:reg_u}\link{item:f_in_Lrho} and the calculations above. However, in view of the compact support of $f$, this lower bound on $\nu$ can be dropped.
\end{proof}

\addcontentsline{toc}{chapter}{References}

\end{document}